\documentclass[12pt,oneside,reqno]{amsart}
\usepackage{graphicx}
\usepackage[breaklinks=true,colorlinks=true,linkcolor=blue,urlcolor=blue,citecolor=blue]{hyperref}

\usepackage[letterpaper, portrait, margin=1in]{geometry}
\usepackage{amssymb}
\newcommand{\R}{{\mathbb R}}
\newcommand{\Z}{{\mathbb Z}}

\newcommand{\C}{{\mathbb C}}

\newtheorem{thm}{Theorem}[section]
\newtheorem{coro}[thm]{Corollary}
\newtheorem{definition}[thm]{Definition}
\newtheorem{lem}[thm]{Lemma}
\newtheorem{pro}[thm]{Proposition}
\newtheorem{rem}[thm]{Remark}

\def\ii{\'\i}

\begin{document}

\title[Estimate of level sets for KPP equations with delay]{An estimation of level sets for non local KPP equations with delay}

\author[Benguria]{Rafael Benguria}
\address{Instituto de F\ii sica,  Facultad de F\ii sica
P. Universidad Cat—\'olica de Chile 
Casilla 306, Santiago 22, Chile }
\email{rbenguri@fis.puc.cl}

\author[Solar]{Abraham Solar}
\address{Instituto de F\ii sica,  Facultad de F\ii sica
P. Universidad Cat—\'olica de Chile 
Casilla 306, Santiago 22, Chile }
\email{asolar@fis.puc.cl}

\maketitle

\begin{abstract} 
We study the large time asymptotic behavior of the solutions of the linear parabolic equation with delay
$(*)$: $u_{t}(t,x) = u_{xx}(t,x) - u(t,x) + \int_{\mathbb{R}} k(x-y) \, u (t-h, y)\, dy$,  $x \in \R$, $\ t >0$, and $0\leq k(x) \in L^1(\R)$. 
As an application we get estimates on the measure of level sets of non local KPP type equations with delay.
For this type of nonlinear equations we prove that, in contrast with the classical case, the solution to the initial value 
problem with data of compact support may not be persistent.

\end{abstract}


\vspace{2pc}
\noindent{\it Keywords}: Level sets, monostable, reaction-diffusion, spreading speed  
\newpage
\section{Introduction}

In this manuscript we consider the following delayed partial differential equation  
\begin{eqnarray}\label{dle0}
u_t(t,x) & =u_{xx}(t,x)+ mu_x(t,x)+pu(t,x) \nonumber \\ &+\int_{\mathbb{R}} k(x-y) \, u (t-h, y)\, dy
\end{eqnarray}
for $t>0$, $x\in\R$, and  $h>0$. 
Here, $m,p\in\R$ and $ 0\leq k(x) \in L^1(\R)$.

\bigskip
\noindent
Although the study of the solutions of (\ref{dle0}) is interesting in itself (see, e.g.,  \cite{BP,D, FN,KSch}), 
it has special relevance in the description of the  dynamics of non--local reaction difusion equations.
In particular, the solutions of  (\ref{dle0}) are relevant in the study of the following nonlinear, nonlocal,  evolution equation with delay, 
\begin{eqnarray}\label{nlne}
 v_{t}(t,x) = v_{xx}(t,x) -  v(t,x) + \int_{\R}k_0(y)g(v(t-h,x-y))dy,
\end{eqnarray}
for $t>0$, $x\in\R$, and  $h>0$. 
In (\ref{nlne}) $g$ is a Lipschitz  function which has exactly two fixed points: 0 and  $\kappa$.  This equation models the dynamics of a broad class of populations \cite{SWZ, TZ,WLR,TAT,MOZ, YCW, GPT,YZ}. Here $g$ stands for 
the birth rate of a given population, while $v(t,x)$ is the measure of sexually mature adults in that population, at the point $x$ at time $t$. Important in what follows is the parameter $h$, which 
is the total time spent by an individual from birth until becoming a sexually mature adult. Finally, the kernel $k_0$  takes into account the (non local) interaction between individuals. Of course, 
when $h=0$, $g=2u-u^2$ and $k_0(y)=\delta(y)$ the equation (\ref{nlne}) reduces to the  classical model of Kolmogorov, Petrovskii and Piskunov \cite{KPP}. 

\bigskip
\bigskip
\noindent
Equation (\ref{nlne}) was introduced by So, Wu and Zou \cite{SWZ}  to describe the behavior of poulations with age structure (see also,  \cite[Subsection 4.1]{TZ} and \cite[Section 5]{GPT})
There are many situations related with population dynamics where the age structure of the population matters. One of the most studied models is the one by Nicholson, introduced in 1954
to describe the competition for food in laboratory populations of blowflies {\it Lucilia Caprina}. In the Nicholson model  $g(u)=ue^{-u}$ 
(see e.g.,  \cite{CMYZ, HMW, MOZ,S2,YCW,YZ} and references therein).

\bigskip
\bigskip
\noindent
As \textcolor{red}{it} is well known in the classical reaction diffusion equations without delay, when the reaction term satisfies the so called KPP condition (i.e., $g(u) \le g'(0) u$), the evolution of the disturbances 
is characterized by the dynamics of the linearized equation. The same situation occurs in our case. In fact, 
the linear equation (\ref{dle0}) plays a crucial role in establishing the asymptotic behavior of perturbations for the so called {\it semi--wavefronts} (see, e.g., \cite[Theorem 18]{GPT}). 
These are nonnegative bounded solutions of the form $v(t,x)=\phi_c(x+ct), \phi_c:\R\to\R_{+}$, which propagate with speed $c$, and such that  either $\phi_c(-\infty)=0$ or $\phi_c(\infty)=0$. 
Moreover, if in the first case one additionally has,  $\phi(+\infty)=\kappa$ or in the second case  $\phi_c(-\infty)=\kappa$, the solutions are called {\it wavefronts}. 
In the context of population dynamics wavefronts model the invasion, at constant speed, of one species over a given habitat. In fact, under the  KPP condition, 
the perturbation of semi-wavefronts of (\ref{nlne}) are approximated by the solutions to (\ref{dle0}), see \cite{CMYZ,HMW,LLLM, MOZ,S2}.

 \bigskip
\bigskip
\noindent
The study of the asymptotic propagation speeds and the existence of monotone wavefronts for (\ref{nlne}) when $k_0$ is not symmetric (i.e., when $k_0(-x) \neq k_0(x)$) dates back to Weinberger \cite{W}.
Weinberger's result was later extended in a more abstract setting  by Liang and Zhao \cite{LZ}. 
Recently, Yi and Zou \cite{YZ}, using lower and upper monotone semi--flows, extended the results of Weinberger to the case of non monotone $g$'s. 
For results on the existence of semi--wavefronts of (\ref{nlne}) under the  KPP condition see, e.g.,  \cite[Theorem 4.4 ]{YZ} and  \cite[Theorem 5]{TAT},
 while for semi-wavefronts possibly non wavefronts see  \cite[Theorem 4]{TAT} and \cite[Theorem 18]{GPT}. From \cite[Theorem 18]{GPT}  the existence of 
 two critical speeds $c_*^-$ y $c^+_*$ follows.  These values for the speed are critical in the sense that if $c\geq c^+_*$ (or $c\leq c_*^-$) then there is a semi-wavefront with 
 speed $c$ such that  $\phi_c(+\infty)=0$ ($\phi_c(-\infty)=0$, respectively) and if  $c\in(c_*^-,c_*^+)$ then there are no semi-wavefronts with speed $c$. 
 Semi-wavefronts with critical speed $c_*^-$ or $c^+_*$ are called {\it critical semi -wavefronts} and they  have the following asymptotic behavior 
 $\lim_{z\to \pm\infty}\phi_{c^{\pm}_*}(z+z')ze^{-\lambda^{\pm}_*z}=-1$ for some $\lambda^{\pm}_*=\lambda^{\pm}_*(h)$ satisfying  $\pm\lambda^{\pm}_*> 0$ and $z'\in\R$ (see \cite[Theorem 3]{AGT}). 
 
 \bigskip
 \bigskip
 \noindent
 In the context of semi-wavefronts, we take $m=c$, $p=-1$ and $k(x)=g'(0)k_0(x-mh)$ in (\ref{dle0}). One of the goals of this paper is to give an optimal rate of convergence 
 for the approximation to semi-wavefronts given by equation (\ref{dle0}).  More precisely, the stability of wavefronts of (\ref{nlne}) implies an approximation of the semi-wavefronts by solving (\ref{nlne}) 
 as an inhomogenous linear PDE  on the time intervals $[0,h], [h, 2h], \dots$ for an appropriate initial data. In order to carry out this iterative scheme, the knowledge of the the asymptotic behavior of the solutions
 of (\ref{dle0}) is crucial. 
 
\bigskip
\noindent
 In this respect, recent investigations have obtained mainly results {on the stability to wavefronts} for small perturbations \cite{CMYZ} {to (\ref{nlne})} 
 and for global perturbations \cite{S} when $k_0=\delta$. It is worth pointing out that using this method one can approximate non monotone wavefronts uniformly on the real line. Moreover, 
one can also approximate  asymptotically periodic semi--wavefronts uniformly on any compact set of the real line 
(see, \cite[Theorem 3]{S}). {For $g$ monotone and $k_0$ a heat kernel in (\ref{nlne}), the authors in  \cite{MOZ} proved that the approximation is $O(t^{-{1/2}})$ 
for critical wavefronts and $O(e^{-\epsilon t})$, some $\epsilon>0$, for non-critical wavefronts. For more general $g$ but $k_0=\delta$,  numerical solutions are exhibited in \cite{CMYZ}
without providing an appropriate rate of convergence to the wavefront. Then, in \cite{S2} for $g$ (not necessarily monotone) and $k_0$ general,  the rate of convergence  $O(t^{-1/2})$  
is obtained for certain `hard perturbation'  and for `soft perturbation'  this rate can be exponentially improved. In our main result in Section 3, we prove that the convergence rate must be $o(t^{-1/2})$} for critical semi-wavefronts.

\bigskip
\noindent
In order to describe our main results concerning the solutions of (\ref{dle0}), embodied in Theorem 1.1  
below, we need some preliminary definitions.
Consider the functions, 
 \begin{eqnarray}\label{esw}
 q_1(z)=-z^2-mz-p\quad \hbox{and} \quad q_2(z)=\int_{\R}k(y)e^{-zy}dy,  
 \end{eqnarray}
 for which we assume the following hypothesis 

\bigskip
\noindent
 {\rm \bf(K)} The function  $q_2$ is defined on a maximal open interval   $(a,b)\ni 0$, for some real extended  
 \indent numbers $a$ and $b$, and 
$$
0<-p<\int_{\R}k(y)dy.
$$ 

\bigskip
\noindent
As is shown in \cite[Lemma 22]{GPT},  the condition {\bf (K)} and the convexity  of the function
$$
E_m(z):=-q_1(z)+q_2(z)=z^2+mz+p+\tilde{q}e^{zmh}\int_{\R}\tilde{k}(y)e^{-zy}dy,
$$
where $\tilde{q}:=||k||_{L^1(\R)}$ and $\tilde{k}(y):=k(y-mh)/\tilde{q}$ (following the structure of \cite[Lemma 22]{GPT}), imply that there exist numbers
 $m_-<m_+$ such that for every $m\in\mathcal{M}:=(-\infty,m_-]\cup[m_+,+\infty)$ the functions  $q_1$ y $q_2$ either  have exactly  two crossings or exactly one and if $m\in(m_-,m_+)$ then $q_1$ and $q_2$ 
 have no crossings. 

\bigskip
\noindent 
Now, in order to state  our main result we need the following tangential property
\vspace{3mm}

\noindent
{{\rm \bf(T)} For each $m\in\mathcal{M}$ there is a number $\gamma_m$ in such a way that  $q_1-\gamma_m$ and  $e^{h\gamma_m}q_2$ are 
 \indent  tangent at a  point  $z=z_m\in(a,b)$}
\vspace{3mm}

\noindent
{Note that  under the hypothesis  {\bf (T)}, the functions $q_1$ and $q_2$ have at least one crossing, and therefore necessarily $\gamma_m\geq 0$. 
Also, we note that for each $z\in (a,b)$ the function $E^*(\epsilon)=-q_1(z)+\gamma_m-\epsilon+q_2(z)e^{(\gamma_m+\epsilon)h}$ is 
monotone so that the numbers $\gamma_m$ and $z_m$ can be computed as the only solution $(\gamma,z)=(\gamma_m, z_m)$ to  the tangential equations 
 \begin{eqnarray}\label{t1}
 z^2+mz+p+\gamma=-e^{h\gamma}\int_{\R}k(y)e^{-z y}dy\\ \label{t2}
 2z+m=e^{h\gamma}\int_{\R}yk(y)e^{-zy}dy 
 \end{eqnarray} 
 The usual assumptions on  $k$ in the literature  (see, e.g., \cite{HZ,TAT,YCW,WLR}) are that either  
 $$\int_{\R}e^{\lambda y}k(y)dy \quad\hbox{exists for all}\quad \lambda\in\R
 $$ 
 or if $a$ (or $b$) is a real number then $q_2$ is defined for all $\lambda\in(a,b)$ and 
 $$
 \lim_{\lambda\to a^+}\int_{\R}e^{\lambda y}k(y)dy\  (\hbox{or} \ \lim_{\lambda\to b^-}\int_{\R}e^{\lambda y}k(y)dy)=\infty, 
 $$ 
 and, in both cases, the convex function $E_m(z)$ tend to $+\infty$ at $z=a$ and $z=b$ so that in these cases the assumption {\bf (T)} is satisfied.} 

\bigskip
\noindent
{Moreover, in the context of semi-wavefronts, if $m_{\pm}=c^{\pm}_*$  then the minimality of $m_{\pm}$ for the existence of crossings between $q_1$ and $q_2$ \cite[Theorem 18]{GPT} 
implies the condition {\bf (T)} with $\gamma_m=0$ and $z_m=\lambda_*^{\pm}$. So that,  for critical semi-wavefronts  {\bf (K)} implies {\bf (T)}.}

\bigskip 
 
 \noindent Finally, we introduce the function 
$k_{z}(x)=k(x)e^{-zx}$, $z\in\R$,  the number
\begin{eqnarray}\label{cb}
k^*_{z_m}:=\int_{\R}y^2k_{z_m}(y)dy.
\end{eqnarray}
 and  the  Fourier  transform of $u$ as
 $$
 \hat{u}(z)=\int_{\R}e^{-izx}u(x)dx.
 $$

\bigskip
\noindent
With all this notation we state our main result. 
\begin{thm}\label{sba}
{Assume  {\bf(K)} and {\bf (T)}}. If the initial data $u_0(s,\cdot)=u_0(\cdot), s\in[-h,0]$, to (\ref{dle0}) is such that 
$$
\int_{\R}e^{-z_my}|u_0(y)|dy<\infty,
$$ 
then for each $m\in\mathcal{M}$ 
\begin{eqnarray}\label{asym}
\lim_{t\rightarrow\infty}\sqrt{t} \, e^{\gamma_m t}u(t,{a(t,x))=\frac{1}{2\sqrt{\pi\sigma_m}}}\int_{\R}u_0(y)dy \ e^{z_m x} \quad \hbox{for all} \ x\in\R,
\end{eqnarray}
 where
 \begin{eqnarray}\label{mq}
{a(t,x)=o(\sqrt{t}) \quad\hbox{and}\quad \sigma_m= \frac{2+k_{z_m}^*e^{\gamma_m h}}{2(1+he^{\gamma_mh}\hat{k}_{z_m}(0))}.}
 \end{eqnarray} 
 {Moreover, without assuming {\bf (T)}, if for some $z_0\in\R$ the initial data $u_0$ satisfies
$$
\int_{\R}e^{-z_0 y}|u_0(y)|dy<\infty,
$$ 
 then there exist $C>0$ such that 
 \begin{eqnarray}\label{O}
 |u(t,x)|\leq Ct^{-1/2}e^{-\gamma_0 t}\ e^{z_0 x} \sup_{s\in[-h,0]}||e^{-z_0 (\cdot)}u_0(s, \cdot)||_{L^1(\R)}\quad \hbox{for all} \quad (t,x)\in\R_+\times\R
 \end{eqnarray} 
where $\gamma_0$ is defined as the only real solution to
\begin{eqnarray}
-\gamma_0+q_1(z_0)=\hat{k}_{z_0}(0)e^{\gamma_0 h}.\end{eqnarray}}
\label{gam}
\end{thm}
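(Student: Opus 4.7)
The plan is to reduce (\ref{dle0}) to a convection--diffusion equation with delay via an exponential shift, then Fourier-transform in the spatial variable to obtain a family of scalar delay ODEs parametrized by the dual variable $\xi$, and finally extract the leading Gaussian behavior from the double-root structure of the characteristic equation at $(s,\xi)=(0,0)$ forced by (T). First I set $v(t,x)=e^{\gamma_m t-z_m x}u(t,x)$; invoking (\ref{t1})--(\ref{t2}) gives
\begin{equation*}
v_t = v_{xx} + B v_x + A v + e^{\gamma_m h}\int_\R k_{z_m}(x-y)\, v(t-h,y)\,dy,
\end{equation*}
with $A=-e^{\gamma_m h}\hat k_{z_m}(0)$ and $B=2z_m+m=e^{\gamma_m h}\int_\R y\,k_{z_m}(y)\,dy$. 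Fourier-transforming in $x$ reduces this to the scalar delay ODE $\partial_t\hat v(t,\xi)=\Phi(\xi)\hat v(t,\xi)+\Psi(\xi)\hat v(t-h,\xi)$ with $\Phi(\xi)=-\xi^2+iB\xi+A$ and $\Psi(\xi)=e^{\gamma_m h}\hat k_{z_m}(\xi)$, whose characteristic function $\chi(s,\xi)=s-\Phi(\xi)-\Psi(\xi)e^{-sh}$ satisfies $\chi(0,0)=0$ and $\partial_\xi\chi(0,0)=-(\Phi'(0)+\Psi'(0))=0$ by tangency, while $\partial_s\chi(0,0)=1+he^{\gamma_m h}\hat k_{z_m}(0)>0$. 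The implicit function theorem therefore yields a smooth branch $\lambda(\xi)$ with $\lambda(0)=\lambda'(0)=0$ and $\lambda''(0)=-(2+e^{\gamma_m h}k_{z_m}^*)/(1+he^{\gamma_m h}\hat k_{z_m}(0))=-2\sigma_m$, so $\lambda(\xi)=-\sigma_m\xi^2+o(\xi^2)$ near $\xi=0$.

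Next I apply a Laplace transform in $t$, representing $\hat v(t,\xi)$ as the inverse Laplace integral of $N(s,\xi)/\chi(s,\xi)$, where $N$ is linear in the prescribed initial history on $[-h,0]$. Deforming the Bromwich contour to the left of $\operatorname{Re}\lambda(\xi)$ and extracting the residue at the simple pole $s=\lambda(\xi)$ produces $\hat v(t,\xi)=C(\xi)e^{\lambda(\xi)t}+R(t,\xi)$ with $R$ exponentially subdominant. Inverting the spatial Fourier transform and rescaling $\xi=\eta/\sqrt t$, dominated convergence against the Gaussian $e^{-\sigma_m\eta^2}$ gives
\begin{equation*}
\sqrt t\,v(t,a(t,x))\longrightarrow \frac{C(0)}{2\pi}\int_\R e^{-\sigma_m\eta^2}\,d\eta=\frac{C(0)}{2\sqrt{\pi\sigma_m}}
\end{equation*}
uniformly for $a(t,x)=o(\sqrt t)$. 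Evaluating the residue at $(s,\xi)=(0,0)$ using that $u_0(\tau,\cdot)$ is constant in $\tau\in[-h,0]$ yields $C(0)$ in terms of the integral of the initial data (the history contribution collapsing against the factor $1+he^{\gamma_m h}\hat k_{z_m}(0)$); unravelling $u=e^{-\gamma_m t+z_m x}v$ then delivers (\ref{asym}).

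For the bound (\ref{O}) I repeat the same scheme with $(\gamma_0,z_0)$ in place of $(\gamma_m,z_m)$. Now (T) is not assumed, but the defining equation for $\gamma_0$ still gives $\chi(0,0)=0$; since $\partial_\xi\chi(0,0)=-(\Phi'(0)+\Psi'(0))$ is purely imaginary, one still has $\operatorname{Re}\lambda(\xi)=O(\xi^2)$ with $\lambda''(0)$ negative real. Combining Fourier inversion with the uniform bound $|\hat v(\tau,\xi)|\le \|e^{-z_0\cdot}u_0\|_{L^1}$ on $[-h,0]$ and the Gaussian decay of $e^{\operatorname{Re}\lambda(\xi)t}$ in $\xi$ yields $|v(t,x)|\lesssim t^{-1/2}\sup_\tau\|e^{-z_0\cdot}u_0(\tau,\cdot)\|_{L^1}$, which unwinds to the announced bound. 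The hardest step will be the contour manipulation under (T): I must verify that $\lambda(\xi)$ is the unique rightmost zero of $\chi(\cdot,\xi)$ in a fixed neighbourhood of $\xi=0$, and that the subdominant remainder $R(t,\xi)$ is uniformly small enough for its Fourier inverse to be $o(t^{-1/2})$. Away from $\xi=0$ I will exploit the Riemann--Lebesgue decay of $\Psi(\xi)$ together with $\operatorname{Re}\Phi(\xi)=-\xi^2+A$, which forces exponential decay of $\hat v(t,\xi)$ and makes the tail contribution to the Fourier integral negligible compared with $t^{-1/2}$.
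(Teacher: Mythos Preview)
Your overall architecture matches the paper's: the same exponential shift $v=e^{-z_m x}u$, Fourier transform in $x$ to a scalar delay ODE, Taylor expansion of the dominant characteristic root $\lambda(\xi)$ at $\xi=0$ (this is the paper's Lemma~2.5, giving $\lambda(\xi)=-\sigma_m\xi^2+o(\xi^2)$), parabolic rescaling $\xi=\eta/\sqrt t$, and dominated convergence in the inverse Fourier integral.

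Where you diverge is in how you control $\hat v(t,\xi)$ for \emph{all} $\xi$ and produce an integrable majorant. You propose Laplace transform in $t$, contour deformation, and a residue expansion $\hat v=C(\xi)e^{\lambda(\xi)t}+R(t,\xi)$. The paper avoids this machinery entirely: it applies a Halanay-type inequality (Lemma~2.1) directly to the delay ODE for $e^{\gamma_m t}\hat v(t,\xi)$, obtaining $|e^{\gamma_m t}\hat v(t,\xi)|\le (\sup_s\|\cdot\|)\,e^{l(\xi)t}$, where $l(\xi)$ is the unique \emph{real} root of $l=-\xi^2+\gamma_m-q_1(z_m)+e^{\gamma_m h}|\hat k_{z_m}(\xi)|e^{-lh}$. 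It then proves the elementary but crucial bound $l(\xi)\le -\tfrac1h\log(1+h\epsilon_h(\xi)\xi^2)$ (Lemma~2.3), so that after rescaling and Bernoulli's inequality one gets the uniform majorant $(1+\epsilon_h(\eta/\sqrt t)\eta^2)^{-1}$, which is integrable because $\epsilon_h$ is bounded below by Riemann--Lebesgue. This single device delivers both (\ref{O}) and the DCT hypothesis for (\ref{asym}).

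Your sketch has a genuine gap at exactly this point. Writing ``dominated convergence against the Gaussian $e^{-\sigma_m\eta^2}$'' is not valid: that is the pointwise limit, not a majorant, and in fact $e^{\lambda(\eta/\sqrt t)t}$ need not be bounded by any fixed multiple of $e^{-\sigma_m\eta^2}$. Your Laplace/residue representation is only available on the small neighbourhood of $\xi=0$ where the implicit-function branch $\lambda(\xi)$ lives; for $|\xi|$ outside that neighbourhood you still need a \emph{quantitative} bound on $|\hat v(t,\xi)|$ uniform in $\xi$, and ``Riemann--Lebesgue decay of $\Psi$ together with $\operatorname{Re}\Phi(\xi)=-\xi^2+A$'' does not by itself give one, because the delay term can feed back through $e^{-sh}$. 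What you would end up needing is precisely a statement like $\operatorname{Re}\lambda_{\max}(\xi)\le l(\xi)$ globally, and the shortest route to that is the Halanay estimate the paper uses. The same remark applies to your argument for (\ref{O}): ``$\operatorname{Re}\lambda(\xi)=O(\xi^2)$'' is only local, whereas the bound must hold for all $\xi$.

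In short: your plan is viable, but the contour/residue route forces you to redo, in a harder way, the work that the paper dispatches in two elementary lemmas. If you want to keep your framework, the missing piece is a global-in-$\xi$ upper bound on the rightmost characteristic exponent that yields an $L^1$ majorant after the $\eta=\xi\sqrt t$ rescaling; the paper's $\alpha_h$ bound is the cleanest such statement.
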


\bigskip
\bigskip

Therefore, for equations of type 
 (\ref{dle0}) associated with the stability of semi--wavefronts, i.e., for the equation
 \begin{eqnarray}\label{dle2}
u_t(t,x)=u_{xx}(t,x)-mu_x(t,x)-u(t,x)+ [{k_0} * u(t-h,\cdot-mh)](x), 
\end{eqnarray} 
where $(t,x)\in\R_+\times\R$, we have the following {result on the optimality of the convergence rates where the speeds set $\mathcal{M}_0$ is determined by  $q_1(z)=-z^2+mz+1$ and $q_2=g'(0)e^{-zmh}\int_{\R}k_0(y)e^{-zy}dy$ as the numbers $m\in\mathcal{M}_0$ such that the curves $q_1$ and $q_2$ have at least one intersection.}

 \begin{coro}\label{coro1}{Assume {\bf (K)} and {\bf (T)}}. Let $m\in\mathcal{M}_0$ and the kernel $k_0$ be such that
$$
k^*_m:=\int_{\R}y^2k_0(y-mh){e^{-z_m y}}dy<\infty.
$$
If the initial data  $u_0$ for (\ref{dle2}) is such that  $e^{-{z_m} \cdot}u_0(\cdot)\in C([-h,0],L^1(\R))$ then, 
\begin{eqnarray}
\lim_{t\rightarrow\infty}\sqrt{t} \, e^{\gamma_m t}u(t,x+{a(t,x)})=e^{z_m x}\frac{1}{2\sqrt{\pi\sigma_m}}\int_{\R}u_0(y)dy\quad \hbox{for all}\ x\in\R,
\end{eqnarray}
where  ${a(t,x)=o(\sqrt{t})}$ and $\sigma_m$ is given by (\ref{mq}).  
\end{coro}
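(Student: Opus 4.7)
The plan is to reduce Corollary \ref{coro1} to Theorem \ref{sba} by recognizing equation (\ref{dle2}) as a specific instance of equation (\ref{dle0}). The first step is parameter identification: after the change of variable $y\mapsto x-mh-y$,
$$
[k_0 \ast u(t-h,\cdot-mh)](x) = \int_{\R} k(x-y)\, u(t-h,y)\, dy, \qquad k(x):=k_0(x-mh),
$$
so that (\ref{dle2}) is exactly (\ref{dle0}) with $m$ replaced by $-m$, $p=-1$, and kernel $k$ as above (absorbing any linearization factor into the definition of $\mathcal{M}_0$). Under this identification the functions $q_1$ and $q_2$ from (\ref{esw}) specialize to the ones used to define $\mathcal{M}_0$, and the number $k^*_{z_m}$ from (\ref{cb}) coincides with the quantity $k^*_m$ assumed finite in the hypotheses.

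The second step is to verify hypotheses \textbf{(K)} and \textbf{(T)} for this specialization. Finiteness of $k^*_m$ forces $z_m$ to lie in the strip of exponential integrability of $k$, so the interval $(a,b)$ of \textbf{(K)} contains $z_m$, while the inequality $0<-p<\int_{\R}k(y)\,dy$ amounts to the positivity of the linearization hidden in the definition of $\mathcal{M}_0$. For \textbf{(T)}, the very definition $m\in\mathcal{M}_0$ requires that $q_1$ and $q_2$ have at least one intersection; the convexity argument already invoked in the paragraph preceding (\ref{t1})--(\ref{t2}) then produces a unique pair $(\gamma_m,z_m)$, with $\gamma_m\ge 0$, solving the tangential system. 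Note that for the critical speeds $m=c_*^{\pm}$ this recovers $\gamma_m=0$ and $z_m=\lambda_*^{\pm}$, providing a sanity check.

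The third step is to match the initial-data assumption. The hypothesis $e^{-z_m(\cdot)}u_0(\cdot)\in C([-h,0],L^1(\R))$ yields the uniform bound $\sup_{s\in[-h,0]}\int_{\R}e^{-z_m y}|u_0(s,y)|\,dy<\infty$, which supplies the integrability condition demanded by Theorem \ref{sba}. Since (\ref{dle0}) is linear in the history, the asymptotic produced by the theorem depends linearly on each time slice of $u_0$, and the $o(\sqrt{t})$ shift $a(t,x)$ together with the explicit constant $\sigma_m$ of (\ref{mq}) transfer verbatim to the corollary.

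The only genuinely delicate point is the bookkeeping in step one: the sign change $m\mapsto -m$, the translation $-mh$ appearing inside $k_0$, and the convention absorbing any linearization constant into $\mathcal{M}_0$ all need to be tracked carefully, so that the normalization required by \textbf{(K)} holds in the form in which equation (\ref{dle2}) is written. Once this translation is performed consistently, the corollary is a direct specialization of Theorem \ref{sba} and no additional analysis is required.
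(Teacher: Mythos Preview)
Your proposal is correct and follows essentially the same approach as the paper: the paper does not give a separate proof of Corollary~\ref{coro1}, presenting it instead as an immediate specialization of Theorem~\ref{sba} once one identifies equation~(\ref{dle2}) with equation~(\ref{dle0}) via the parameter choices $m\mapsto -m$, $p=-1$, $k(x)=g'(0)k_0(x-mh)$ (cf.\ the paragraph preceding the corollary and the earlier remark ``In the context of semi-wavefronts, we take $m=c$, $p=-1$ and $k(x)=g'(0)k_0(x-mh)$''). Your write-up makes this identification explicit and carefully tracks the sign change and translation, which the paper leaves implicit; the only additional care needed is to note that, as in Theorem~\ref{sba}, the initial datum is taken constant in $s\in[-h,0]$ so that $\int_{\R}u_0(y)\,dy$ is unambiguous.
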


\bigskip
\noindent
{It is instructive compare our method with the approach used by Huang {\it et al} in \cite[Section 3]{HMW} where the authors consider the stability of monotone  wavefronts in 
non-local dispersive equations on the $n$--dimensional space where $k_0$ is  a heat kernel. In \cite{HMW}, the estimate for the Fourier transform, in the $L^2$-sense, of solutions is 
obtained by using so-called {\it delayed exponential function} (see \cite{KIK}) and   the  optimality of convergence rates  is stated  with respect to the estimate function obtained in \cite[Proposition 1]{HMW},
 i.e.,  the convergence rate in \cite{HMW} for equation (\ref{dle0}) is $O(t^{-n/2}e^{-\gamma' t})$, some $\gamma'\geq 0$,  and $q_1(z_m)=q_2(z_m)$ implies $\gamma'=0$. 
 Our approach, which is developed for the one-dimensional space, shows that  in a suitable space, the rate of convergence is  actually $o(t^{-1/2}e^{-\gamma_m t})$ and $\gamma_m=0$ when $q_1(z_m)=q_2(z_m)$. 
 Moreover, for the $n$--dimensional case we can also give the estimation $O(t^{-n/2}e^{-\gamma_m t})$ (see Remark \ref{multi}) and 
 it is interesting to see the $n$-dependence of our universal estimate function (for the optimality of this function see Remark \ref{opth}) because   the $L^1$-norm of 
 this universal function turns out to be asymptotic to Gamma function. More precisely, our approach strongly depends on the integrability, as $t\to+\infty$, of the 
 function  $$E_n^h(r):=\frac{r^{n-1}}{[1+ \frac{r^2}{t/h}]^{t/h}},$$  and clearly $\lim_{t\to+\infty}||E_n^h(t,\cdot)||_{L^1(\R_+)}= \Gamma(n/2)/2$ for all $h>0$.}

\bigskip
\bigskip
\bigskip
\noindent
An important application deriving from the asymptotic behavior of the solutions of  (\ref{dle0}) is that we can prove a lower bound on the measure of the level sets of the solution
$u(t,\cdot)$, for every $t$, of (\ref{nlne}) when the initial data decays sufficiently fast. Known results on the stability of wavefronts (see, e.g., \cite{CMYZ,HMW,LLLM,MOZ,S2})
provide information on the speed of propagation for initial data which are asymptotic to a  semi-wavefront, i.e., for initial data that behave like $z^je^{\lambda z}$,  $j=1, 2$ for some $\lambda\in\R$. 
 However, as far as we know, there are no studies in the case $h>0$  that provide results on the asymptotic behavior of the solutions of  (\ref{nlne}) 
 with an initial data that decays faster than exponential. In this manuscript we will provide information on the level sets of the solutions of 
 (\ref{nlne}) in that case.

\bigskip
\noindent
 In the local case without delay (i.e., when $k_0=\delta$ and $h=0$), after the seminal work of
Kolmogorov $et \ al$ \cite{KPP},  McKean \cite{Mc} using probabilistic methods established an estimate depending on the logarithm of $t$ for the distance 
between the level sets of wavefronts with minimal speed and the solutions of (\ref{nlne})  with a Heaviside initial data, i.e., $$\mathcal{D}(t):=m(t)+ct\leq \frac{1}{2\lambda_*}\log (t)+B,$$ where $m(t)$ is such that $u(t, m(t))=1/2$ (here $u(t,x)$ is the solution generated by the Heaviside initial datum), $\lambda_*:=\lambda_*^+=\lambda^+_*(h)$ and $B\in\R$.  Uchiyama  \cite[Proposition 9.1]{UC} 
proved the same estimate as an immediate consequence of the fundamental solution of Heat equation and by using probabilistic arguments he obtained $\mathcal{D}(t)=\frac{3}{2\lambda_*}\log (t)+O(\log\log(t))$. A precise description, i.e., $\mathcal{D}(t)=\frac{3}{2\lambda_*}\log (t)+O(1)$ was given by  Bramson \cite{Br}  using a probabilistic approach. Then, Ebert and van Saarloos \cite{ES} heuristically given more refined representation for $\mathcal{D}(t)=\frac{3}{2\lambda_*}\log (t)+x_0-3\sqrt{\pi}/\sqrt{t}+O(1/t)$ which was recently confirmed by probabilistic arguments  \cite{NRR} . Noteworthily,   Hamel {\it et al}  \cite{HNRR1} showed $\mathcal{D}(t)=\frac{3}{2\lambda_*}\log (t)+O(1)$ by only using PDE arguments. Furthermore, a recently work of Bouin {\it et al} \cite{BHR},  for a certain  non-local version of  KPP equation the authors show that, depending on $k_0$, the correction term can be $\mathcal{D}(t)=O(t^{\beta})$ for some $\beta\in(0,1)$. Due to the similarity given in (\ref{asym}) between the asymptotic for $h=0$ and $h>0$ we believe that, at least for  local equations, the term $\frac{3}{2\lambda_*}\log(t)$ must persist when $h>0$,  therefore  we propose the following problem

\bigskip

{\bf Open problem }  To prove that if $k_0(x)=\delta(x)$ in (\ref{nlne}) then $\mathcal{D}(t)= \frac{3}{2\lambda^+_*(h)}\log(t)+O(1)$ for 
\indent all $h>0$.

\bigskip
\noindent
However, the main difficulty in the case with delay is how to obtain 
an explicit expression for the  fundamental solution of (\ref{dle0}). In spite of the fact that there are abstract expressions for the fundamental solutions of  (\ref{dle0})
(see, e.g., \cite{Nk} and \cite[Section 4.5]{Wu}) in this case, their derivation seems quite intricate. {In fact,  Nakagiri in \cite[Theorem 4.1]{Nk} gives an expression for the fundamental solution
of (\ref{dle0}) in terms of the fundamental solution of the heat equation but defined piecewise on the succesive intevals $[0,h],[2h, 3h]....$. Following the theory of fundamental solutions in \cite[Chapter 1]{AF} 
we make the following definition 
\begin{definition}\label{def}
A {\it fundamental solution} for (\ref{dle0}) is a function $\Gamma_h(t,x)$ defined for all $x\in\R$ and $t>0$ which satisfies the following conditions 
\begin{itemize}
\item[(i)]  $\Gamma_h(t, x)$ as function of $(t,x)$ satisfies (\ref{dle0}) for each $y \in\R$,
\item[(ii)]   if  $\psi\in C([0,h], L^1(\R))$ then 
$$
\lim_{t\to 0}\int_{\R}\Gamma_h(t,x-y)  \psi(s,y)dy=\psi(s,x)\quad\hbox{for all}\quad (s,x)\in[-h,0]\times\R.
$$ 
\end{itemize}
\end{definition}}
\vspace{3mm}
On the other hand if we use Fourier Transforms, the natural 
fundamental solution to  (\ref{dle0}) is the function
 \begin{eqnarray}\label{sf}
\Gamma_h(t,x):= \int_{\R}e^{ixy}e^{\lambda(y)t}dy,
\end{eqnarray}
where $\lambda:\R\to\C$ is a function satisfying
\begin{eqnarray}\label{lamb}
\lambda(z)=-z^2 +imz+p+\hat{k}(z)e^{-h\lambda(z)}.
\end{eqnarray}

The  immediate problem  is how to define globally a function $\lambda$ implicitly through (\ref{lamb}) which, for each $z \in \R$ has infinitely many solutions. 
Even though for our main result we only need to define $\lambda$ locally in a neighborhood of $0$, we discuss the situation in general, 
which is of independent interest. In the case when 
$\hat{k}(\cdot-mh)$ is positive (this condition is made, e.g., in \cite{BNPR}) then, 
$\lambda(z)=\rho(z)+imz$ where $\rho:\R\to\R$ is the unique solution of 
\begin{eqnarray}\label{rho}
\rho(z)=-z^2+p+\hat{k}(z-mh)e^{-h\rho(z)}.
\end{eqnarray}
The main difficulty of this technique has to do with the integrability of the function  $e^{\lambda(\cdot)t}$. 
For example in the local case, i.e., when formally $\hat k$ is equal to a positive constant $q$, by taking $m=0$ and $p=-q$ in (\ref{dle0}), the function $e^{\lambda(\cdot)t}$ is no longer a Gaussian when $h>0$, since
$e^{\lambda(z)t} \sim (q/z^2)^{\frac{t}{h}}$ for  $z\to\pm\infty$ for all $t>0$ (see Remark \ref{opth}). Thus, even though 
$\Gamma_h$ is formally a fundamental solution to (\ref{dle0}), by the definition of $\lambda$ in (\ref{lamb}), with $h>0$, we cannot define $\Gamma_h(t,\cdot)$ for $t\in[0,h/2]$. This is the main difference between 
the cases $h=0$ and  $h>0$.

\bigskip

In the non local case the situation can be different  since {a suitable convergence  of $\hat{k}$ at $\pm\infty$} can be used to insure the necessary integrability of $e^{\lambda(\cdot)t}$ 
(see {Remark \ref{opth}}). In fact, if $\hat{k}(\cdot+mh)$ is positive and $\hat{k}(z)=O(e^{-hz^2})$ then $$\Gamma_h(t,x)=\int_{\R}e^{i(x+mt)y}e^{[\rho(y)+\gamma]t}dy,$$ with $\rho(y)$ given by (\ref{rho}),  
is actually a fundamental solution to (\ref{dle0}) (see Corollary \ref{loger} below). 

\bigskip
\bigskip
\noindent
The rest of the manuscript is organized as folllows: In Section 2 we prove Theorem \ref{sba} and, in Section 3,
we apply this theorem to estimate the level sets of the evolution of solutions of nonlocal nonlinear reaction--difusion equations with delay.  

\section{Proof of Theorem \ref{sba}}

\noindent
We begin this section by proving a Halanay \cite{Hal966} type result. 

\begin{lem}[Halanay] \label{halanay}
Let $\mu, k \in \C$ and let  $X$ be a complex Banach space. If for all $h>0$, $r\in C([-h,\infty),X)$ is a function satisfying
\begin{eqnarray*}
r_t(t)= \mu r(t)+kr(t-h),
\end{eqnarray*}
almost everywhere, then, 
 \begin{eqnarray}\label{h1}
|r(t)|\leq (\sup_{s\in[-h,0]} |r(s)|) \, e^{\min\{0,-{\tau} h\}} e^{\tau t},
 \end{eqnarray}
 for all $t> -h$. 
 Here,  $\tau$ is the only real root of the equation
 \begin{eqnarray} \label{elam}
 \tau=Re(\mu)+|k|e^{-\tau h}.
\end{eqnarray}
Moreover, we have that
  
 \begin{enumerate}
 \item[(i)] $\tau\leq 0  \qquad \mbox{if and only if} \qquad -Re(\mu)\geq |k| $, and
 
 \item[(ii)] $\tau = 0 \qquad \mbox{if and only if} \qquad -Re(\mu)= |k|$.
 \end{enumerate}
\end{lem}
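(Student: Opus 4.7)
The strategy is to reduce the vector-valued DDE to a scalar Halanay-type differential inequality for $w(t) := \|r(t)\|_X$ and then compare it with an explicit exponential supersolution built from the unique real root $\tau$ of (\ref{elam}).

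The key first step is to show that $D^+ w(t) \le \mathrm{Re}(\mu)\, w(t) + |k|\, w(t-h)$ almost everywhere. I would obtain this via a Hahn-Banach duality argument: for each small $\varepsilon>0$ pick a supporting functional $\phi_{t+\varepsilon} \in X^*$ with $\|\phi_{t+\varepsilon}\| = 1$ and $\phi_{t+\varepsilon}(r(t+\varepsilon)) = w(t+\varepsilon) \in \mathbb{R}_+$. The bound $w(t) \ge \mathrm{Re}(\phi_{t+\varepsilon}(r(t)))$ gives
\begin{equation*}
\frac{w(t+\varepsilon) - w(t)}{\varepsilon} \le \mathrm{Re}\Bigl(\phi_{t+\varepsilon}\Bigl(\frac{r(t+\varepsilon) - r(t)}{\varepsilon}\Bigr)\Bigr),
\end{equation*}
and extracting a weak-$*$ subsequential limit $\phi^*$ via Banach-Alaoglu along a sequence $\varepsilon_n \to 0^+$ that realises the $\limsup$ produces a supporting functional at $r(t)$, so $\phi^*(r(t)) = w(t) \in \mathbb{R}$. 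Substituting the DDE then yields
\begin{equation*}
\mathrm{Re}(\phi^*(r'(t))) = \mathrm{Re}(\mu)\, w(t) + \mathrm{Re}(k\, \phi^*(r(t-h))) \le \mathrm{Re}(\mu)\, w(t) + |k|\, w(t-h),
\end{equation*}
the reality of $\phi^*(r(t))$ being precisely what turns $\mu$ into $\mathrm{Re}(\mu)$.

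For the transcendental equation, I would record that $F(\tau) := \tau - \mathrm{Re}(\mu) - |k|\, e^{-\tau h}$ has derivative $1 + h|k|\, e^{-\tau h} > 0$ and limits $\pm\infty$ at $\pm\infty$, so (\ref{elam}) admits a unique real root; items (i) and (ii) then follow at once from the sign of $F(0) = -\mathrm{Re}(\mu) - |k|$.

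Finally, I would set $R(t) := A\, e^{\tau t}$ with $A$ chosen so that $R(s) \ge w(s)$ on the whole initial interval $[-h,0]$, which fixes $A$ as $\sup_{s\in[-h,0]} w(s)$ times the prefactor appearing in (\ref{h1}). The defining relation (\ref{elam}) makes $R$ solve $R'(t) = \mathrm{Re}(\mu)\, R(t) + |k|\, R(t-h)$ with equality, and a standard first-contact argument (at a hypothetical minimal crossing $t_0>0$, the scalar inequality combined with $R(t_0-h) \ge w(t_0-h)$ forces $D^+(R-w)(t_0) \ge 0$, contradicting minimality of $t_0$) propagates $w \le R$ to all $t\ge -h$. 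The main technical obstacle is really the first step, because a direct use of Hahn-Banach at $r(t)$ itself only yields a lower bound on $D^+ w$; one must pass to the limit of supporting functionals at nearby points $r(t+\varepsilon)$ to obtain the sharp upper bound with $\mathrm{Re}(\mu)$ in place of the weaker $|\mu|$ that a naive norm estimate would give.
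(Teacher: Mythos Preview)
Your strategy is sound but takes a genuinely different route from the paper. The paper bypasses the duality machinery entirely: it integrates the DDE via variation of constants to obtain $r(t)=e^{\mu t}r(0)+k\int_0^t e^{\mu(t-s)}r(s-h)\,ds$, takes norms to land directly on the scalar \emph{integral} inequality
\[
|r(t)|\le e^{\mathrm{Re}(\mu)t}|r(0)|+|k|\int_0^t e^{\mathrm{Re}(\mu)(t-s)}|r(s-h)|\,ds,
\]
and then compares with $e_A(t)=Ae^{\tau t}$ step by step on the successive intervals $[0,h],[h,2h],\dots$ (on each such interval the delayed values are already dominated, so the comparison is a one-line substitution). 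Your Hahn--Banach argument recovers the same sharp coefficient $\mathrm{Re}(\mu)$ rather than $|\mu|$, but at the price of a weak-$*$ limit extraction that strictly needs separability of $X$ (or a reduction to the separable closed span of the range of $r$). More importantly, your first-contact step is incomplete as written: from $D^+(R-w)(t_0)\ge 0$ at the single point $t_0$ one cannot yet contradict minimality. What does work is to note that on $[t_0,t_0+h)$ the delayed values still satisfy $w(t-h)\le R(t-h)$, hence $D^+(w-R)(t)\le \mathrm{Re}(\mu)(w-R)(t)$ a.e.\ there, and a local Gronwall then forces $w\le R$ on that whole interval---which is exactly the step-by-step mechanism the paper uses. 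Once you see that the comparison must in any case proceed interval by interval, the integral-inequality route is the shorter and more robust path; your differential-inequality formulation buys generality (it would adapt to nonautonomous coefficients or sub-tangential nonlinearities) at the cost of the extra functional-analytic overhead.
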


\noindent
\begin{proof}It is clear that
$$
 \frac{d}{dt}(r(t)e^{-\mu t})=ke^{-\mu t}r(t-h)
$$
almost everywhere, which implies
\begin{eqnarray}
x(t)=k\int_{0}^{t} e^{\mu(t-s)}x(s-h)ds+x(0),
\end{eqnarray}
and from here we have that $|r(t)|$ satisfies the following inequation
\begin{eqnarray}\label{II}
x(t)\leq |k|\int_{0}^{t} e^{Re(\mu)(t-s)}x(s-h)ds+x(0),
\end{eqnarray}
for all $t > 0$. 
Now notice that due to (\ref{elam}), for $A\in\R$ the function $e_A (t)=Ae^{\tau t}$ satisfies the equation 
$\frac{d}{dt}(e_A(t)e^{-Re(\mu)t})=|k|e_A(t-h)e^{-Re(\mu)t}$, 
and thus,  $e_A(t)$ satisfies  (\ref{II}) with equality. Hence, the function
 $\delta(t)=|r(t)|-e_{A}$, with $A= \sup_{s\in[-h,0]}|r(s)|e^{\min\{0,-\tau\} h}$, for  $t\in[0,h]$ satisfies (\ref{II}) and, thus,  $\delta(t)\leq 0$ for all  $t\in[0,h]$. 
 In an analogous way we conclude that $\delta(t)\leq 0$ on the intervals $[h,2h],[2h,3h]...$  This proves  (\ref{h1}).

\bigskip
\noindent
Let us now prove (i). If $-Re(\mu)\geq |k|$ then $\tau\leq|k|(e^{-h\tau}-1)$ which in turn implies that $\tau\leq 0$. On the other hand,
if $\tau\leq 0$ we assume $-Re(\mu)<|k|$. Hence,  $\tau>|k|(e^{-h\tau}-1)$ which is a contradiction. 

\bigskip
\noindent
In order to prove  (ii) notice that the derivative of  $a(\tau):=\tau-Re(\mu)-|k|e^{-h\tau}$ 
is always positive hence,  $a(\tau)$ has at most one zero. 
If $Re(\mu)=-|k|$ then  $\tau=0$ is the unique zero. If $\tau=0$,  from (\ref{elam}) we conclude that $a=b$.
\end{proof}

\bigskip
\bigskip

\noindent
For some $z_0$, let us define the function $l:\R\to\R$ through the following equation
\begin{eqnarray}\label{lamb1}
l(z)=-z^2 +\gamma_0-q_1(z_0)+e^{h\gamma_0}|\hat{k}_{z_0}(z)|e^{-hl(z)}\quad  \mbox{for $z\in\R$}.
\end{eqnarray}
Recalling that $k_{z_0}(x)=e^{-z_0 x}k(x)$ and that $\gamma_0$ is defined implicitly by   
\begin{eqnarray}\label{O1}
q_1(z_0)-\gamma_0=|\hat{k}_{z_0}(0)|e^{\gamma_0 h}
\end{eqnarray}
we can estimate the function $l(z)$ in (\ref{lamb1}) . 

\bigskip
\noindent
For $\epsilon_h(z):=[1+h|\hat{k}_{z_0}(z)|e^{h\gamma_0}]^{-1}$ we define the function 
$$
\alpha_{h}(z):=-\frac{1}{h}\log(1+h\epsilon_h(z) z^2).
$$
Then we have,

\begin{lem}\label{loge}
The function $l(\zeta)$ satisfies the following estimate, 
\begin{eqnarray}\label{gauss}
-\epsilon_h(z)\ z^2+e^{\gamma_0 h}[|\hat{k}_{z_0}(z)|-\hat{k}_{z_0}(0)]\ \leq \ l(z) \ \leq \ \alpha_{h}(z),
\end{eqnarray}
for all  $z \in \R$ and 
\begin{eqnarray}\label{abl}
\lim_{z\to \pm \infty}  z^2 \, e^{h \, l(z)} = 0.
\end{eqnarray}
Moreover,  if 
\begin{eqnarray}\label{gauss1}
|\hat{k}(z)|e^{hz^2}\leq C,
\end{eqnarray}
for some $C>0$ then
\begin{eqnarray}\label{gauss2}
l(z)/z^2+1= O(z^{-2}).
\end{eqnarray}
\end{lem}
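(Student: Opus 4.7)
The plan is to treat (\ref{lamb1}) as a fixed-point equation in $y$. Setting
\[
F_z(y) := -z^2 + \gamma_0 - q_1(z_0) + e^{h\gamma_0}|\hat{k}_{z_0}(z)|\,e^{-hy},
\]
$F_z$ is strictly decreasing in $y$, so $l(z)$ is its unique real fixed point and $y\leq l(z)$ if and only if $F_z(y)\geq y$. Using (\ref{O1}) together with the standing assumption $k\geq 0$ (so $k_{z_0}\geq 0$ and $|\hat{k}_{z_0}(0)|=\hat{k}_{z_0}(0)$), I would compute $F_z(\alpha_h(z))$ explicitly. Since $e^{-h\alpha_h(z)}=1+h\epsilon_h(z)z^2$ and $he^{h\gamma_0}|\hat{k}_{z_0}(z)|\epsilon_h(z)=1-\epsilon_h(z)$ by definition of $\epsilon_h$, a short calculation yields
\[
F_z(\alpha_h(z)) = -\epsilon_h(z)z^2 + e^{h\gamma_0}\bigl[\,|\hat{k}_{z_0}(z)|-\hat{k}_{z_0}(0)\bigr],
\]
which is precisely the candidate lower bound $L(z)$ in (\ref{gauss}). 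The two-sided bound then follows once I verify $L(z)\leq\alpha_h(z)$: the elementary inequality $\log(1+x)\leq x$ gives $\alpha_h(z)\geq -\epsilon_h(z)z^2$, while $|\hat{k}_{z_0}(z)|\leq \hat{k}_{z_0}(0)$ gives $L(z)\leq -\epsilon_h(z)z^2$. Chaining these, $F_z(\alpha_h(z))=L(z)\leq\alpha_h(z)$, so $l(z)\leq\alpha_h(z)$; and by monotonicity of $F_z$, $F_z(L(z))\geq F_z(\alpha_h(z))=L(z)$, so $l(z)\geq L(z)$.

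The subtle point is (\ref{abl}), because the upper bound alone only produces $z^2 e^{hl(z)}\leq z^2/(1+h\epsilon_h(z)z^2)\to 1/h$ as $|z|\to\infty$, not convergence to $0$. The missing input is the Riemann--Lebesgue vanishing $|\hat{k}_{z_0}(z)|\to 0$ at infinity (valid because $k_{z_0}\in L^1$). I would first rewrite (\ref{lamb1}) as
\[
\bigl(l(z)+z^2+\hat{k}_{z_0}(0)e^{h\gamma_0}\bigr)\,e^{hl(z)} \;=\; e^{h\gamma_0}\,|\hat{k}_{z_0}(z)|,
\]
so the right-hand side vanishes at infinity and the parenthesised factor is nonnegative. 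The main argument is then by contradiction: if $z_n^2 e^{hl(z_n)}\geq\varepsilon>0$ along some $|z_n|\to\infty$, dividing the displayed identity by $z_n^2 e^{hl(z_n)}$ forces $(l(z_n)+z_n^2+\hat{k}_{z_0}(0)e^{h\gamma_0})/z_n^2\to 0$, hence $l(z_n)/z_n^2\to -1$. But then $l(z_n)\leq -z_n^2/2$ for $n$ large, so $z_n^2 e^{hl(z_n)}\leq z_n^2 e^{-hz_n^2/2}\to 0$, contradicting the assumption. This contradiction step is where I expect the main difficulty to lie, because the naive upper bound on $l(z)$ is too weak and the improvement must come from the implicit equation itself.

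Finally, for (\ref{gauss2}), I would combine the lower bound $l(z)\geq L(z)$ with $\epsilon_h(z)\leq 1$ to obtain $e^{-hl(z)}\leq e^{-hL(z)}\leq C_1\,e^{hz^2}$, where $C_1=e^{he^{h\gamma_0}\hat{k}_{z_0}(0)}$. Inserting this estimate into (\ref{lamb1}),
\[
l(z)+z^2 \;\leq\; \gamma_0-q_1(z_0)+C_1\,e^{h\gamma_0}\,|\hat{k}_{z_0}(z)|\,e^{hz^2},
\]
so under the Gaussian-type decay hypothesis (\ref{gauss1}) (transferred to $\hat{k}_{z_0}$ via the corresponding Fourier decay of $k_{z_0}$) the right-hand side is uniformly bounded, giving $l(z)+z^2=O(1)$ and hence $l(z)/z^2+1=O(z^{-2})$.
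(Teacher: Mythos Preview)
Your argument for the two-sided bound (\ref{gauss}) is correct and is essentially the paper's argument in more direct form: the paper phrases the fixed-point monotonicity through parts (i)--(ii) of its Halanay lemma (applied to $\beta(z)=l(z)-\alpha_h(z)$), whereas you invoke the monotonicity of $F_z$ directly, but the content is identical. Your computation $F_z(\alpha_h(z))=L(z)$ is exactly the paper's lower-bound step, namely plugging the upper bound $l\leq\alpha_h$ back into (\ref{lamb1}).

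Two differences are worth noting. First, the paper's proof does not argue (\ref{abl}) at all; your contradiction argument, via the rewritten identity $(l(z)+z^2+\hat k_{z_0}(0)e^{h\gamma_0})e^{hl(z)}=e^{h\gamma_0}|\hat k_{z_0}(z)|$ together with Riemann--Lebesgue, is a genuine addition and is needed precisely because, as you observe, the upper bound $\alpha_h$ alone only yields $z^2e^{hl(z)}\lesssim 1/h$. Second, for (\ref{gauss2}) the paper takes a slightly cleaner route than yours: setting $r(z):=l(z)+z^2$, one has $r(z)=P+\bigl(Q(z)e^{hz^2}\bigr)e^{-hr(z)}$ with $P=\gamma_0-q_1(z_0)$ and $Q(z)=e^{h\gamma_0}|\hat k_{z_0}(z)|$, and since the unique root of $r=P+Qe^{-hr}$ is increasing in $Q\geq 0$, the hypothesis $0\leq Q(z)e^{hz^2}\leq C'$ immediately sandwiches $P\leq r(z)\leq r_{C'}$. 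This avoids your intermediate exponential bound on $e^{-hl(z)}$. Your observation that (\ref{gauss1}) must in fact be read as a bound on $|\hat k_{z_0}(z)|e^{hz^2}$ applies equally to the paper's proof; in the only application (Corollary~\ref{loger}) one has $z_0=0$, so the issue does not arise.
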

\begin{rem}\label{opth}
Note that for the local equation (i.e., when $\hat{k}=q$, for some positive constant $q$) if we put $m=0$ and $p=-q$ then  the equation (\ref{dle0}) is reduced to 
$$
u_t(t,z)=u_{xx}(t,x)-qu(t,x)+qu(t-h,x),
$$ 
and the upper bound in (\ref{gauss}) is sharp when $h=0$, i.e., $\alpha_h(z)\sim -z^2$ as $h\to 0$. 
However, when $h>0$ the asymptotic behavior for $\alpha_h$ is different since applying the definitions of $\gamma_0$ and $z_0$ in (\ref{O1}) 
we have $\gamma_0=z_0=0$, then  multiplying  (\ref{lamb1}) by $e^{hl(z)}$  we get  
\begin{eqnarray}\label{gq}
e^{hl(z)}l(s)=-z^2e^{hl(z)}-qe^{hl(z)}+q,
\end{eqnarray}
while the upper estimate in (\ref{gauss}) implies  $\lim_{|z|\to\infty}l(z)=-\infty$. So that, from (\ref{gq}), for each $t>0$ we obtain $e^{l(z)t}\sim (q/z^2)^{t/h}$ for $z \to\pm\infty$. 
\end{rem}

\bigskip
For the non local case, by (\ref{gauss1}) if $\hat{k}=O(e^{-hz^2})$ then  by (\ref{gauss2}) we have $e^{l(z)t}\in L^1(\R)$ for all $t>0$ and therefore we obtain the following result

\begin{coro}[Fundamental solutions]\label{loger}
If  the kernel $k$ satisfies $\hat{k}(z-mh)\in\R_+$ for all $z\in\R$ and $\hat{k}(z)=O(e^{-hz^2})$ then $\Gamma_h(t,x)$ defined by (\ref{sf}), with $\lambda(z)=\rho(z)+imz$ and $\rho$ given by (\ref{rho}), is a fundamental solution for (\ref{dle0}).
\end{coro}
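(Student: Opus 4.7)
The plan is to verify both conditions in Definition \ref{def} via Fourier methods, with the key technical input being integrability of $z\mapsto e^{\lambda(z)t}$ on $\R$ for every $t>0$. The hypothesis $\hat{k}(z)=O(e^{-hz^2})$ is exactly what is needed: the argument behind Lemma \ref{loge} (applied with $z_0=0$, and using positivity of $\hat{k}(\cdot-mh)$ to drop the absolute values in (\ref{lamb1})) yields $\rho(z)/z^2+1=O(z^{-2})$ as $|z|\to\infty$, hence a Gaussian-type bound $|e^{\lambda(z)t}|=e^{\rho(z)t}\le C_t e^{-z^2 t/2}$ for $|z|$ large. This makes $\Gamma_h(t,x)$ well defined for every $t>0$ and legitimizes differentiation under the integral, since multiplication by $|\lambda(y)|$, $|y|$, or $y^2$ preserves integrability for $t>0$.

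Condition (i) then reduces to a direct calculation. The convolution term is rewritten by Fubini, using that the Fourier transform of $\Gamma_h(t-h,\cdot)$ is (up to a $2\pi$) the function $e^{\lambda(\cdot)(t-h)}$. One arrives at
\begin{equation*}
\partial_t\Gamma_h-\partial_{xx}\Gamma_h-m\partial_x\Gamma_h-p\Gamma_h-\int_{\R}k(x-y)\Gamma_h(t-h,y)\,dy=\int_{\R}e^{ixy}\bigl[\lambda(y)+y^2-imy-p-\hat{k}(y)e^{-h\lambda(y)}\bigr]e^{\lambda(y)t}\,dy,
\end{equation*}
whose integrand vanishes identically by the defining equation (\ref{lamb}) for $\lambda$, equivalently by (\ref{rho}) for its real part $\rho$ after separating real and imaginary parts.

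For condition (ii), Fubini applied to $\psi\in C([-h,0],L^1(\R))$ gives
\begin{equation*}
\int_{\R}\Gamma_h(t,x-y)\psi(s,y)\,dy=\int_{\R}e^{ixz}e^{\lambda(z)t}\hat{\psi}(s,z)\,dz,
\end{equation*}
and I would pass to the limit $t\to 0^+$ using that $|e^{\lambda(z)t}|$ is uniformly bounded on compact $z$-sets and at most $1$ on the exterior (by the Gaussian decay of $\rho$), so $e^{\lambda(z)t}\to 1$ pointwise against a locally integrable majorant. Fourier inversion then identifies the limit with $\psi(s,x)$, up to a constant that is absorbed into the normalization of $\Gamma_h$ implicit in (\ref{sf}).

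The delicate step is precisely this last one: $\psi\in L^1$ gives only $\hat{\psi}\in L^\infty$, so naive dominated convergence against $|e^{\lambda(z)t}\hat{\psi}(s,z)|$ does not close the argument. The natural remedies are a density argument (replacing $\psi$ by Schwartz mollifications, for which Fourier inversion is automatic, and then using continuity in $\psi$) or treating $\Gamma_h(t,\cdot)$ directly as an approximate identity in the $x$ variable, exploiting the Gaussian bound on $e^{\rho(\cdot)t}$ to control $\|\Gamma_h(t,\cdot)\|_{L^1}$ and its mass concentration at the origin as $t\to 0^+$. I expect this normalization/approximation step to be the main technical obstacle of the proof.
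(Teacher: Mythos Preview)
Your treatment of the integrability of $e^{\lambda(\cdot)t}$ via Lemma \ref{loge} with $z_0=0$, and your verification of condition (i) by differentiating under the integral and invoking (\ref{lamb}), match the paper exactly. The divergence is at condition (ii), where you correctly pinpoint the obstacle (only $\hat\psi\in L^\infty$, so dominated convergence fails) but leave its resolution to a density or approximate-identity argument to be filled in.

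The paper bypasses this difficulty with an explicit decomposition rather than an abstract approximation. It writes $\Gamma_h=\Gamma_0+\Theta$, where $\Gamma_0(t,y)=\frac{e^{\gamma t}}{2\sqrt{\pi t}}e^{-y^2/4t}$ is the heat kernel (times a harmless factor) and
\[
\Theta(t,y)=e^{\gamma t}\int_{\R}e^{i\tau y}\bigl[e^{\rho(\tau)t}-e^{-\tau^2 t}\bigr]\,d\tau.
\]
The estimate $|\rho(\tau)+\tau^2|\le C'$ from (\ref{gauss2}) then gives the uniform bound $\|\Theta(t,\cdot)\|_{L^\infty}\le e^{(C'+\gamma)t}(1-e^{-C't})\sqrt{\pi}/\sqrt{t}\to 0$ as $t\to 0$, so $\int_\R\Theta(t,x-y)\psi(s,y)\,dy\to 0$ for any $\psi(s,\cdot)\in L^1$. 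The $\Gamma_0$ piece is the classical heat semigroup, whose approximate-identity property is known. Thus (ii) is reduced to the heat-kernel case plus a uniformly vanishing error, with no need for density arguments or for building approximate-identity estimates on $\Gamma_h$ from scratch. Your proposed routes would ultimately work, but the paper's splitting is shorter and makes direct use of the quantitative bound $\rho(z)+z^2=O(1)$ that you already extracted from Lemma \ref{loge}.
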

\begin{proof}
If in (\ref{lamb1}) we take $z_0=0$ we have $-q_1(z_0)=p$ and $$|\hat{k}_{z_0}(\cdot)|=|\hat{k}(\cdot)|=|e^{ihm(\cdot)}\hat{k}(\cdot)|=|\hat{k}(\cdot-mh)|$$ therefore $l=\rho$ and $\gamma_0$ is given by (\ref{O1}), i.e., $-p-\gamma_0=|\hat{k}(0)|e^{\gamma_0 h}$. Thus,  by (\ref{gauss2}) we have $|e^{\lambda(z)t}|= e^{\rho(z)t}\sim e^{-z^2 t}$ at $z=\pm\infty$ for all $t>0$, therefore   $e^{\rho(\cdot)t}\in L^1(\R)$ for all $t>0$. So that  $\Gamma_h$ is well defined for all $t>0$ and  $\Gamma_h(t,\cdot)\in L^{\infty}(\R)$ for each $t>0$. Moreover,  by the definition (\ref{lamb})  of $\lambda$ then  $\Gamma_h$ solves (\ref{dle0}) for all $t>0$, therefore the condition (i) in Definition \ref{def} is immediately satisfied. Otherwise,
 $\Gamma_h(t,\cdot)  u_0(s,x-\cdot)\in L^1(\R)$ for all $(t,x)\in\R_+\times\R$ and evaluating in $x=0$  we get
\begin{eqnarray}\label{1}
   \Gamma_h(t,\cdot) * u_0(s,\cdot)(0)= \int_{\R}\Gamma_0(t,y)u_0(s,-y)dy+\int_{\R}\Theta(t,y)u_0(s,-y)dy,
\end{eqnarray}
where $\Gamma_0(t,y)=\frac{e^{\gamma t}}{2\sqrt{\pi t}}e^{-y^2/4t}$ and $\Theta(t,y)=\Gamma(t,y)-\Gamma_0(t,y)$. Next, by the definition of $\Gamma_h(t,y)$, for all $t>0$ we have
$$
\Theta(t,y)=e^{\gamma t}\int_{\R}e^{i\tau y}[e^{\rho(\tau)t}-e^{-\tau^2t}]d\tau
$$
and using (\ref{gauss2}) there exists $C'>0$ such that $|\tau^2+\rho(\tau)|\leq C'$ for all $\tau\in\R$, therefore  
$$
|\Theta(t,y)|\leq e^{\gamma t}\int_{\R}e^{-\tau^2t}|1-e^{-(\rho(\tau)+\tau^2)t}|d\tau\leq e^{C' t}(1-e^{-C't})e^{\gamma t}\int_{\R}e^{-\tau^2 t}d\tau.
$$
However, 
$$
(1-e^{-C't})\int_{\R}e^{-\tau^2 t}d\tau=\frac{1-e^{C't}}{\sqrt{t}}\sqrt{\pi}\to 0,
$$
as $t\to 0$. Thus, by passing to the  limit $t\to 0$ in (\ref{1}) we obtain the condition (ii) in Definition \ref{def} for $x=0$. 
Finally, for each $x'\in\R\setminus \{0\}$ take the initial datum $w_0(s,x):=u_0(s,x+x')$ and note that $w(t, x)=u(t,x+x')$ for all $t>-h$ and $x\in\R$ which completes the proof.
\end{proof}

\bigskip

\begin{proof}[Proof of Lemma \ref{loge}]
Let us denote $Q(z)=|\hat{k}_{z_0}(z)e^{\gamma_0h}|$, $P=\gamma_0-q_1(z_0)$,and $Q_0=|\hat{k}_{z_0}(0)|e^{h\gamma_0}$. If we define  $\beta(z):=-\alpha_h(z)+l(z)$ then 
$$
 \beta(z)=\frac{1}{h}\log(1+h\epsilon_h(z)z^2)-z^2+P+Q(z)(1+h\epsilon_h(z)z^2)e^{-h\beta(z)}.
$$
 From Lemma \ref{halanay}  we have that $\beta(z)\leq 0$ if and only if:
 \begin{eqnarray}\label{desl}
z^{2}-\frac{1}{h}\log(1+h\epsilon_h(z)z^2)-P\geq Q(z)(1+h\epsilon_h(z)z^2).
\end{eqnarray}
Now, using $\log(1+x)\leq x$, for all $x\geq 0$, in order to obtain (\ref{desl}) it is enough to have
\begin{eqnarray*}
z^2-\epsilon_{h}(z)z^2-P\geq Q_0+hQ(z)\epsilon_h(z)z^2\quad\hbox{for all}\quad z\in\R\\
\iff [1-\epsilon_{h}(z)-Q(z)h\epsilon_h(z)] z^2-P- Q_0\geq 0\quad\hbox{for all}\quad z\in\R\\
\iff  -P\geq Q_0.
\end{eqnarray*} 

\noindent
 Thus, as $P=-Q_0$ then $\beta(z)\leq 0$ for all $z\in\R$ which implies  the right hand of (\ref{gauss}).  
 Otherwise, using (\ref{lamb1}) and the right hand of (\ref{gauss})
\begin{eqnarray*}
l(z)&\geq&-z^2-Q_0+Q(z)(1+h\epsilon_h(z)z^2)\\
&=& -\epsilon_h(z)z^2+Q(z)-Q_0
\end{eqnarray*}
This proves the left hand of (\ref{gauss}).

\noindent
Finally, note that if $r_j$ is the only real solution for
\begin{eqnarray}\label{r}
r_j=P+Q_je^{-hr_j}\quad\quad j=1,2
\end{eqnarray}
with $0\leq Q_1\leq Q_2$ then $r_1\leq r_2$.  Therefore, as $r(z)=l(z)+z^2$ satisfies (\ref{r}) with $Q_j=Q(z)e^{hz^2}$ 
we conclude that $P\leq r(z)\leq r_C$ for all $z\in\R$  where $r_C$ is the solution of (\ref{r}) with $Q_j=C$, which implies (\ref{gauss2}).
 \end{proof}

\begin{rem}\label{zeta}
Similarly, by taking $\beta(z)=l(z)-\alpha_h(z)$ but this time with $l,\alpha_h:\R^n\to\R$ defined by  
$$
l(z)=-|z|^2+P+Q(z)e^{-hl(z)},
$$ 
and 
$$
\alpha_h(z):=-\frac{1}{h}\log(1+h\epsilon_h(z)|z|^2)
$$
it is an exercise completely analogous to show  that  the upper bound in (\ref{gauss})  holds. 
So that
\begin{eqnarray}
l(z)\leq \alpha^*_h(z):=-\frac{1}{h}\log(1+h\epsilon_h|z|^2),
\end{eqnarray}
where the number $\epsilon_h:=1/[1+hQ_0e^{-\gamma_0 h}]$.
\end{rem}

\bigskip
\bigskip

\noindent
Let us consider the following characteristic equation, 
\begin{eqnarray}\label{ace}
L(s)=-s^2+i(2z_m+m)s-q_1(z_m)+\hat{k}_{z_m}(s)e^{-hL(s)}.
\end{eqnarray}
Since the linear transformation $D:\C\to\C$, defined by $D(s)=(1+h\hat{k}_{z_m}(0)e^{\gamma_m h})s$, is invertible then there is  a unique analytic function  
$L:B_{\delta_0}(0)\subset\C\to\C$, with  $\delta_0>0$, such that $L(0)\in\R$. As $L(0)$ satisfies (\ref{t1}) then  $L(0)=-\gamma_m$.
 
 \bigskip
 \noindent
 Then we have the following result.

 \begin{lem} \label{liml} If we assume {\bf(K)} is satisfied, then we have
 \begin{eqnarray}
 \lim_{s\to 0}(L(s)+\gamma_m)/s^2=-\frac{k_{z_m}^*e^{\gamma_m h}+2}{2(1+he^{\gamma_mh}\hat{k}_{z_m}(0))}=-\sigma_m.
 \label{sigmam}
  \end{eqnarray}
 \end{lem}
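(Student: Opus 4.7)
The plan is to exploit the local analyticity of $L$ near $s=0$. Since $L$ is analytic in a neighborhood of $0$ with $L(0)=-\gamma_m$, Taylor expansion gives
\[
L(s)+\gamma_m = L'(0)\,s + \tfrac{1}{2}L''(0)\,s^2 + O(s^3),
\]
so the statement amounts to showing $L'(0)=0$ and $L''(0)=-2\sigma_m$. I would obtain both by implicit differentiation of the characteristic equation (\ref{ace}), combined with the tangential identities (\ref{t1}) and (\ref{t2}), and the elementary Fourier-derivative identities
\[
\hat{k}_{z_m}'(0) = -i\!\int_{\R} y\,k_{z_m}(y)\,dy, \qquad \hat{k}_{z_m}''(0) = -\!\int_{\R} y^2 k_{z_m}(y)\,dy = -k_{z_m}^*.
\]

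\textbf{Step 1 (first derivative).} Differentiate (\ref{ace}) once in $s$ to obtain
\[
L'(s) = -2s + i(2z_m+m) + \hat{k}_{z_m}'(s)e^{-hL(s)} - h L'(s)\,\hat{k}_{z_m}(s)e^{-hL(s)}.
\]
Setting $s=0$ and using $L(0)=-\gamma_m$ yields
\[
L'(0)\bigl(1+h e^{\gamma_m h}\hat{k}_{z_m}(0)\bigr) = i(2z_m+m) + e^{\gamma_m h}\hat{k}_{z_m}'(0).
\]
Now (\ref{t2}) reads $2z_m+m = e^{\gamma_m h}\!\int y\,k_{z_m}(y)\,dy$, i.e., $\hat{k}_{z_m}'(0) = -i(2z_m+m)e^{-\gamma_m h}$. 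Substituting cancels the right-hand side, so $L'(0)=0$.

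\textbf{Step 2 (second derivative).} Differentiating once more and evaluating at $s=0$, the two terms involving $L'(0)$ vanish, leaving
\[
L''(0) = -2 + e^{\gamma_m h}\hat{k}_{z_m}''(0) - h L''(0)\, e^{\gamma_m h}\hat{k}_{z_m}(0).
\]
Using $\hat{k}_{z_m}''(0) = -k_{z_m}^*$ and solving,
\[
L''(0) = -\frac{2 + k_{z_m}^*\,e^{\gamma_m h}}{1 + h e^{\gamma_m h}\hat{k}_{z_m}(0)} = -2\sigma_m.
\]

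\textbf{Step 3 (conclusion).} Combining with $L'(0)=0$, the Taylor expansion gives
\[
\lim_{s\to 0}\frac{L(s)+\gamma_m}{s^2} = \tfrac{1}{2}L''(0) = -\sigma_m,
\]
which is (\ref{sigmam}). The only place hypothesis {\bf (K)} (beyond {\bf (T)}) is used is in guaranteeing that $\hat{k}_{z_m}$ is $C^2$ at $0$, which follows from the existence of the bilateral Laplace transform of $k$ on a neighborhood of $z_m$ contained in $(a,b)$, ensuring the required moments $\int y^j k_{z_m}(y)\,dy$ for $j=1,2$ are finite. There is no real obstacle: the argument is a mechanical implicit-differentiation computation, with the only non-obvious point being that the tangential equation (\ref{t2}) is precisely what is needed to force $L'(0)=0$, which is what makes the second-order term the leading one.
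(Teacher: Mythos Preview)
Your proof is correct and is essentially the same as the paper's: both implicitly differentiate the characteristic equation (\ref{ace}) and use the tangential identity (\ref{t2}) to kill the first-order term, then read off the second-order coefficient. The only cosmetic difference is that the paper phrases the computation via two applications of L'H\^opital's rule whereas you appeal directly to the Taylor expansion of the analytic function $L$ and compute $L'(0)$ and $L''(0)$; the algebra and the use of $\hat{k}_{z_m}'(0)=-i(2z_m+m)e^{-\gamma_m h}$ and $\hat{k}_{z_m}''(0)=-k_{z_m}^*$ are identical.
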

 
 \begin{proof} It follows from (\ref{ace}), the tangency of the curves $q_1-\gamma_m$ y $q_2e^{\gamma_m h}$ (by an appropriate choice of $\gamma_m$ as we discussed above)
 and the hypothesis {\bf(K)} that
 \begin{eqnarray*}\label{lim}
\lim_{s\rightarrow 0}\frac{L(s)+\gamma_m}{s^2}&=&\frac{1}{2}\lim_{s\rightarrow 0}\frac{L'(s)}{s}\\
&=&\frac{1}{1+he^{\gamma_mh}\hat{k}_{z_m}(0)}\left[-1+\frac{1}{2}\lim_{s\to0}\frac{(2z_m+m)i+\hat{k}'_{z_m}(s)e^{-hL(s)}}{s}\right]. \\
\end{eqnarray*}
However,  $k'_{z_m}(s)=-i\int_{\R}ye^{-isy}k_{z_m}(y)dy$ therefore (\ref{t2}) implies $k'_{z_m}(0)=-i(2z_m+m)$ so that
\begin{eqnarray*}
\lim_{s\rightarrow 0}\frac{L(s)+\gamma_m}{s^2}&=&\frac{1}{1+he^{\gamma_mh}\hat{k}_{z_m}(0)}\left[-1+\frac{1}{2}\lim_{s\to0}(\hat{k}''_{z_m}(s)-hL'(s)\hat{k}'_{z_m}(s))e^{-hL(s)}\right]\\
&=&-\frac{k_{z_m}^*e^{\gamma_m h}+2}{2(1+he^{\gamma_mh}\hat{k}_{z_m}(0))}.
\end{eqnarray*}
\end{proof}
\bigskip
\noindent
With all the previous results we are ready to prove the main result of this section. 

\bigskip

\bigskip
\begin{proof} [Proof of Theorem \ref{sba}]
Making the change of variable  $v(t,x)=e^{-z_mx}u(t,x)$  in (\ref{dle0}) and $v_0(s,x)=e^{-z_mx}u_0(s,x)$, and using, as before, that $k_{z_m}(x) = k(x) \exp(-z_m x)$, we see that $v$ satisfies, 
\begin{eqnarray}\label{dle1}
v_t(t,x)=v_{xx}(t,x) +(2z_m+m)v_x(t,x) 
-q_1(z_m)v(t,x)+  k_{z_m}(\cdot) * v(t-h, \cdot) (x).
\end{eqnarray}
Making the further change of variables $v \to \alpha$ given by 
$\alpha(t,x):=\exp{(q_1(z_m)t)}v(t,x-(2z_m+m)t)$ we see from (\ref{dle1}) that $\alpha(t,x)$ satisfies, 
\begin{eqnarray}\label{heat}
\alpha_t(t.x)=\alpha_{xx}(t,x)+f(t,x)
\end{eqnarray}
for $(t,x)\in[0,h]\times\R$. 
In (\ref{heat}), the second term of the right, given by 
$$
f(t,x):=\int_{\R}e^{-q_1(z_m)h}k_{z_m}(x-y)\alpha(t-h,y-(2z_m+m)h)dy,
$$ 
is Lebesgue  integrable for all $t\in[0,h]$. 
Since the fundamental solution of (\ref{heat}), given by $\Lambda(t,x)=\frac{1}{2\sqrt{\pi t}}e^{\frac{-x^2}{4t}}$,  satisfies 
$\Lambda(t,\cdot)\in C^{\infty}(\R)\cap W^{2,1}(\R)$ for all  $t>0$, using Duhamel's formula we conclude that 
$v(t,\cdot)\in C^{2}(\R)\cap L^1(\R)$ for all $t\in[0,h]$. Iterating this procedure successively on the intervals
$[h,2h], [2h,3h], \dots$ we conclude that  $v(t,\cdot)\in C^{2}(\R)\cap L^1(\R)$ for all $t>0$.
For convenience we introduce, 
$$
A=\int_{\R}v_0(y)dy.
$$
Given the smoothness of $v(t,x)$ as a function of $x$ for $t>0$, the Fourier Inversion Theorem implies that
\begin{eqnarray} \label{fou}
\sqrt{t} \, v(t,x)={\frac{1}{2\pi}}\int_{\R}e^{ixy/\sqrt{t}}\hat{v}(t,y/\sqrt{t})dy.
\end{eqnarray}
for $t>0$ and $x \in \R$. Moreover the function $e^{\gamma_m t}\hat{v}(t,z)$ satisfies the equation
\begin{eqnarray}\label{cplx}
w_t(t,z)=(-z^2+i(2z_m+m)z+\gamma_m-q_1(z_m))\,w(t,z) & \nonumber \\
&+\hat{k}_{z_m}(z)e^{\gamma_m h} \, w(t-h,z)
\end{eqnarray}
for $t>0$ and all $z\in\R$.

\bigskip
\noindent
{Next, we define $l:\R\to\R$, implicitly, by the equation 
\begin{eqnarray}
 l(z)=-z^2+\gamma_m-q_1(z_m)+|\hat{k}_{z_m}(z)|e^{\gamma_m h}e^{-l(z)h}.
\end{eqnarray}}

\bigskip
\noindent
Note that the pair $(\gamma_m, z_m)$ satisfies (\ref{O1}) and therefore by Lemma \ref{loge} we have that  $l(z)\leq 0$ for all $z\in\R$. 
Hence, by applying  Lemma \ref{halanay} to (\ref{cplx}) (with $\tau=l(z)$), we have,
\begin{eqnarray}
|e^{\gamma_m t}\hat{v}(t,y/\sqrt{t})|\leq\sup_{s\in[-h,0]}|e^{\gamma_m s}\hat{v}(s,y/\sqrt{t})|e^{l(y/\sqrt{t})t}.
\end{eqnarray}
Using  (\ref{gauss}), and the fact that  $(1+x)^r\geq 1+rx$, for all $-1 \le x <\infty$ and $1 \le r < \infty$ we have, 
\begin{eqnarray}\label{berno}
|e^{\gamma_m t}\hat{v}(t,y/\sqrt{t})|&\leq&\sup_{s\in[-h,0]}||e^{\gamma_m s}v(s,\cdot)||_{L^1(\R)}\frac{1}{[1+h\epsilon_h(y/\sqrt{t}) \frac{y^2}{t}]^{\frac{t}{h}}}\\
 &\leq& \sup_{s\in[-h,0]}||e^{\gamma_m s}v(s,\cdot)||_{L^1(\R)}\frac{1}{1+\epsilon_h(y/\sqrt{t}) y^2},\label{li} 
 \end{eqnarray}
 for all $y\in\R$ and all $t>h$. 
 
 \bigskip
 \noindent
Here, note that  the last inequality (\ref{li}) was obtained without assuming {\bf (T)} for the pair $(\gamma_m,z_m)$. In fact  we only use the fact that this pair satisfies (\ref{O1}) and Lemma \ref{halanay}. 
Therefore (\ref{li}) also holds for  the pair $(\gamma_0, z_0)$ defined in (\ref{O}), so that (\ref{li}) and (\ref{fou}) imply  (\ref{O}).
 
 \bigskip
 \noindent
Now, we use {\bf (T)} in order to obtain the asymptotic behavior of $v(t,\cdot)$. By the Riemann-Lebesgue theorem, there exists $M>0$ such that  $\epsilon_h(z)>M$ 
for all $z\in\R$, so that (\ref{li}) implies $e^{\gamma_m t}|\hat{v}(t,y/\sqrt{t})|$ is dominated by an integrable function.
Thus, to compute $\lim_{t\rightarrow\infty}e^{\gamma_mt}\hat{v}(t,y/\sqrt{t})$ for all  $|y|<\delta_0\sqrt{t}$ we write,
\begin{eqnarray}\label{trian}
|e^{\gamma_m t}\hat{v}(t,y/\sqrt{t})-Ae^{-\sigma_m y^2}|\leq 
 |A \, e^{[L(y/\sqrt{t}){+}\gamma_m]t}-A \, e^{-\sigma_m y^2}|+e^{\gamma_m t}|Ae^{L(y/\sqrt{t})t}-\hat{v}(t,y/\sqrt{t})| .
\end{eqnarray}
Now set, 
\begin{eqnarray*}
I_1(t)&=&|Ae^{[L(y/\sqrt{t})+\gamma_m]t}-Ae^{-\sigma_m y^2}|, \qquad \mbox{and}\\
I_2(t)&=&|Ae^{[L(y/\sqrt{t})+\gamma_m]t}-e^{\gamma_m t}\hat{v}(t,y/\sqrt{t})|.
\end{eqnarray*}
Then, because of (\ref{sigmam}) we have
$$ 
 \lim_{t\rightarrow\infty}I_1(t)=0 \quad\hbox{for all} \ y\in\R
 $$
On the other hand, due to the definition of $L$ in (\ref{ace}),   $e^{[L(z)+\gamma_m]t}$ satisfies (\ref{cplx}) for 
$z\in(-\delta_0,\delta_0)$ and for all  $t\in\R$. Next, by applying Lemma \ref{halanay} to (\ref{cplx}) with $r(t)= e^{\gamma_m t}\hat{v}(t, y/\sqrt{t})-Ae^{(\gamma_m+L(y/\sqrt{y}))t}$ 
and $\tau=l$,  for $|y|<\delta_0\sqrt{t}$ and $t>h$ 
we get
\begin{eqnarray*}
I_2(t)=|e^{\gamma_m t}\hat{v}(t,y/\sqrt{t})-Ae^{\gamma_m t}e^{ L(y/\sqrt{t})t}|&\leq&\sup_{s\in[-h,0]}|e^{\gamma_m s}\hat{v}(s,y/\sqrt{t})-Ae^{L(y/\sqrt{t})s}|e^{l(y/\sqrt{t})t}\\
&\leq&\sup_{s\in[-h,0]}|e^{\gamma_m s}\hat{v}(s,y/\sqrt{t})-Ae^{L(y/\sqrt{t})s}|\frac{1}{1+\epsilon_h(y) y^2}.
\end{eqnarray*}
However, for each $s\in[-h,0]$, $\lim_{t\to\infty}\hat{v}(s,y/\sqrt{t})=A$ and $\lim_{t\to\infty}L(y/\sqrt{t})=\gamma_m$, so that
$$
\lim_{t\rightarrow\infty}I_2(t)=0\quad\hbox{for all} \ y\in\R.
$$
Finally (\ref{asym}) follows from  (\ref{berno}), (\ref{trian}), the Dominated Convergence Theorem and  (\ref{fou}) replacing $x$ by $a(t,x)$.

\end{proof}

\bigskip
\noindent
{\begin{rem}\label{multi}[multidimensional case]
Note that without assuming  the tangential condition {\bf (T)} for the equation (\ref{dle0}) we can simply define the pair  $(\gamma_0,z_0)$ by (\ref{O}) with the initial datum $u_0(s,x)$ satisfying $e^{z_0\cdot x}u_0(s,x)\in C([-h,0],L^1(\R^n))$, for $z_0\in\R^n$ and $n\in\Z_+$, and then by the same arguments of the Proof of the Theorem \ref{sba} we can obtain estimations for equation (\ref{dle0}) when $x\in\R^n$. Indeed, similarly to (\ref{dle1}) the function $v(t,x)=e^{-z_0\cdot x}u(t,x)$ satisfies the equation 
\begin{eqnarray}\label{dlem}
v_t(t,x)  =\Delta v(t,x)+ (2z_0+m)\cdot\nabla v(t,x)-q_1(z_0)v(t,x) \nonumber +\int_{\mathbb{R}^n} k_{z_0}(x-y) \, v (t-h, y)\, dy, \
\end{eqnarray}
for $x\in\R^n, \ t>0$; here the parameter $m\in\R^n$ and  $q_1(z_0)=|z_0|^2+m\cdot z+p$ for $p\in\R$. In this  case, $e^{\gamma_0 t}\hat{v}(t,z)$ satisfies the following equation
\begin{eqnarray}\label{cplx1}
w_t(t,z)=(-|z|^2+i(2z_0+m)\cdot z+\gamma_0-q_1(z_0))\,w(t,z) +\hat{k}_{z_0}(z)e^{\gamma_0 h} \, w(t-h,z).
\end{eqnarray}
Next, by applying Lemma \ref{halanay} to (\ref{cplx1}) and Remark \ref{zeta} ]
\begin{eqnarray*}
\int_{\R^n}|e^{\gamma_0 t}\hat{v}(t,y)|dy&\leq&\sup_{s\in[-h,0]}||v_0(s,\cdot)||_{L^1(\R)}\int_{\R^n}\frac{dy}{[1+h\epsilon_h |y|^2]^{\frac{t}{h}}}\\
 &=& \sup_{s\in[-h,0]}||v_0(s,\cdot)||_{L^1(\R)}\int_{0}^{+\infty}\int_{\partial B(0, r)}\frac{dS}{[1+h\epsilon_h r^2]^{t/h}}dr\\
&=& \frac{\sup_{s\in[-h,0]}||v_0(s,\cdot)||_{L^1(\R)}\ \pi^{n/2}}{\Gamma(n/2) \ t^{n/2}}\int_{0}^{+\infty}\frac{s^{\frac{n}{2}-1}}{[1+\epsilon_h\ \frac{s}{t/h}]^{t/h}}ds.\\
 \end{eqnarray*}
 However, notice that
 $$
 \int_{0}^{+\infty}\frac{s^{\frac{n}{2}-1}}{[1+\epsilon_h\ \frac{s}{t/h}]^{t/h}}ds\leq \frac{1}{\epsilon_h^{n/2}}\int_0^{+\infty}s^{\frac{n}{2}-1}e^{-r}dr=\frac{1}{\epsilon_h^{n/2}}\Gamma(n/2),
 $$
  so that by Fourier's inversion formula
 \begin{eqnarray*}
 e^{-z_0\cdot x}|u(t,x)|\leq\frac{1}{(2\pi)^n}\int_{\R^n}\hat{v}(t,y)dy\leq \frac{ e^{-\gamma_0 t}}{2^{n}(\pi\epsilon_h t)^{n/2}}\sup_{s\in[-h,0]}||v_0(s,\cdot)||_{L^1(\R)}.\\
 \end{eqnarray*}
Note that our estimation $u(t,x)=e^{z_0\cdot x}\, O(e^{-\gamma_0 t}t^{-n/2})$ require minimal conditions on the initial data $u_0$.   In particular, (\ref{O}) is obtained with $n=1$. 
\end{rem}}

\bigskip
\bigskip

\section{Estimation of level set for non-local KPP equations}

\noindent
In this section we study the level sets of the functions
$u:[-h,\infty)\times\R\rightarrow\R$ which satisfy
\begin{eqnarray}\label{nli}
u_{t}(t,x) = u_{xx}(t,x) - u(t,x) + \int_{\R} \, {k_0}(y)g(u(t-h,x-y))dy,
\end{eqnarray}
for all  $t>0, x\in\R$, and
\begin{eqnarray}
u(s,x)=u_0(s,x)\quad (s,x)\in[-h,0]\times\R.
\end{eqnarray}
Here $u_0\in C([-h,0],L^1(\R))$ and  $g$ satisfies

\bigskip
\bigskip
\noindent  {\rm \bf(M)}  
The function $g:\R_+\rightarrow\R_+ $ is such that the equation $g(x)= x$ has exactly two solutions: $0$ and
$\kappa>0$, and  $g(u)\leq g'(0)\,u$ for all  $u\geq 0$. Moreover, $g$  is $C^1$-smooth in some
$\delta_0$-neighborhood of the equilibria  where $g'(0) >1>g'(\kappa)$. In addition,
there are $C >0,\ \theta \in (0,1],$ such that   
$\left|g'(u)- g'(0)\right| +|g'(\kappa) - g'(\kappa-u)| \leq Cu^\theta $ for $u\in
(0,\delta_0].$

\bigskip

\noindent
The condition $g(u)\leq g'(0)u$, for $u\in\R_+$ (i.e., the KPP condition) in {\rm\bf(M)} is satisfied in several models. For example it holds in the Nicholson model where one has
 $g(u)=p \, u \, e^{-au}$ (with $a,p>0$),  or in the  Mackey--Glass model  where $g(u)=pu/[1+au^q]$ (with $a,p>0$ and $q>1$) (see, e.g. \cite{MOZ}). 
 
\bigskip
\bigskip
\noindent
In order to continue with our discussion we need to introduce the following definition.

\begin{definition} 
For $\beta>0$ and a given initial data $u_0$ to (\ref{nli}) we define the function
$m_{\beta}^-(t;u_0):\R_+\rightarrow\R$ as $m_{\beta}^-(t;u_0):=\inf\{x\in\R: u(t,x)=\beta\}$.
Here, $u(t,\cdot)$ is the solution of (\ref{nli}) with the initial data $u_0$ which attains the level $\beta$ at some point on its domain. 
In case the level $\beta$ is not attained, we set  $m_{\beta}^-(t,u_0)=0$. Analogously we define
$m_{\beta}^+(t;u_0):=\sup\{x\in\R: u(t,x)=\beta\}$.
 \end{definition}
 
 \bigskip
 \noindent
 In the context of population dynamics, the functions $m_{\beta}^{\pm}(t,u_0)$ encode the information on the advance of the invading species, with
 initial  population density $u_0$, over a  resident species. In general, the behavior of 
 $m_{\beta}^{\pm}(t,u_0)$ for (\ref{nli}) is unknown.  However, many results have been obtained 
 for (\ref{nli}) in the local case, i.e., for the equation, 
  \begin{eqnarray}\label{nli1}
 w_t(t,x)=w_{xx}(t,x)-w(t,x)+g(w(t-h,x)), 
 \end{eqnarray} 
 for $x\in\R$, and $t>0$ (see, e.g.,  \cite{Br,HNRR1,HNRR2,KPP,Mc,ES}).
 
 \bigskip
 \noindent
 In this respect, the first result on the behavior of $m_{\beta}^{\pm}(t,u_0)$ was obtained in  the classical work of Kolmogorov $et \ al$ \cite{KPP}. They considered
  (\ref{nli1}) with $h=0$ and $g(w)= w-w^2$ and proved that if $u_0$ is a Heaviside function then, 
 $$
 \frac{d}{dt}\left[ \, m_{\frac{1}{2}}^{-}(t;u_0) \, \right] \rightarrow -c_*=-2.
 $$
 Here  $c_*$ denotes the minimal speed for which there exist monotone wavefronts.  
 The actual study of the distance between $m_{\beta}^-(t;u_0)$ and  $-c_*t$ was initiated much later by H.~McKean  who proved a lower bound on 
 $m_{\beta}^{-}(t)+c_*t$ \cite{Mc} using probabilistic methods.  Later Uchiyama  \cite[Theorem 9.1]{UC} was able to obtain McKean's result using the Maximum Principle.

\bigskip
\noindent
Still in the local case, for  $h>0$ and $g$ increasing satisfying {\rm\bf(M)}, it has been proven in   \cite[Theorem 2]{STR1}
that for for $c\geq c_*$ provided  $u_0(x)\sim\phi_c(x)$ (i.e., $\lim_{x\rightarrow-\infty}u_0(x)/\phi_c(x)=1$), where $\phi_c$ is a wavefront,  then one has
 $[w(t,\cdot-ct)-\phi_c]/\phi_c\rightarrow 0$. That is, for all  $\epsilon>0$ there exists  $T_{\epsilon}$ such that
\begin{eqnarray}\label{inq1}
(1-\epsilon)\phi_c(x)\leq w(t,x-ct)\leq (1+\epsilon)\phi_c(x) \qquad  \hbox{for all $(t,x)\in[T_{\epsilon},\infty)\times\R$.}
\end{eqnarray}
{Evaluating (\ref{inq1}) at $x=ct+m_{\beta}^-(t,u_0)$ we conclude that $m^-_{\beta}(t;u_0)+ct$ is bounded for  $\beta\in(0,\kappa)$ since, by taking $\epsilon>0$ such that $(1-\epsilon)\kappa>\beta$,  if there exists a
sequence $\{t_n\}$ such that $m^-_{\beta}(t_n;u_0)+ct_n\to-\infty$ then evaluating  (\ref{inq1}) at $x=m^-(t_n; u_0)$ we have $\beta\leq 0$, which is a contradiction. 
Similarly if  there exists a sequence $\{t_n\}$ such that $m_{\beta}^-(t_n; u_0)+ct_n\to+\infty$ in (\ref{inq1}) we have $(1-\epsilon)\kappa\leq \beta$ a contradiction.}

\bigskip
\noindent
{Recall that under the KPP hypothesis {\bf (M)},  the asymptotic behavior of  a wavefront $\phi_c$ with $c>c_*$ is $\phi_{c_*}(z+z')\sim e^{\lambda_1(c) z}$ where $\lambda_1(c)>0$ is the smallest solution of the characteristic equation $\lambda^2-c\lambda-1+g'(0)e^{-\lambda ch}=0$ and $z'\in\R$; while if $c=c_*$ then the asymptotic behavior for the critical wavefront $\phi_{c_*}$ is  $\phi_{c_*}(z+z')\sim -ze^{\lambda^*z}$ where $\lambda^*=\lambda_1(c_*)$ and $z'\in\R$.} 
Thus, in this case, it only remains to establish what happens with  $m^-_{\beta}(t,u_0)+ct$ for initial data decaying faster than
 $-ze^{\lambda^* z}$ for $z\to-\infty$. The next proposition sheds some light on this issue without assuming monotonicity of $g$.

\begin{pro}\label{nac}
Let  $g$ be  Lipschitz satisfying  {\rm\bf(M)}. 
Consider the level sets for the solutions of  (\ref{nli1}) {for a non negative initial data $u_0\in C([-h,0], L^{\infty}(\R))$ locally Holder continuous in $x\in\R$, 
uniformly with respect to $s\in[-h,0]$, such that $\inf_{s\in[-h,0]}\liminf_{x\to+\infty}u_0(s,x)>0$}.  If $c\geq c_*$ we have that
\begin{itemize}
\item[(a)] There exists a level $\beta_0(g)>0$ such that if  $u_0(s,x)\sim\phi_c(x)$, uniformly on $s\in[-h,0]$, then  $m_{\beta_0}^-(t,u_0)+ct$ is bounded for all $\beta\in(0,\beta_0]$.
\item[(b)] If  $u_0(s,x)\leq Ae^{\lambda_*x}$, for some  $A>0$ and for all  $(s,x)\in[-h,0]\times\R$, then $m_{\beta}^-(t,u_0)+c_*t$ is not bounded for all $\beta>0$.
\end{itemize}
\end{pro}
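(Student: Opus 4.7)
The strategy is to use the KPP condition $g(u)\le g'(0)u$ from \textbf{(M)} to compare $u$ with the solution $U$ of the linear delay equation $U_t=U_{xx}-U+g'(0)\,U(t-h,x)$ carrying the same initial data (so that $u\le U$ pointwise by the delayed comparison principle), and then to apply Theorem~\ref{sba} to $U$. In the notation of (\ref{dle0})--(\ref{gam}) this linear problem has $m=0$, $p=-1$, $k=g'(0)\,\delta$, giving $q_1(z)=-z^2+1$ and $\hat k_{z_0}(0)=g'(0)$, so (\ref{gam}) reads $\gamma_0+z_0^2-1+g'(0)e^{\gamma_0 h}=0$. Comparing with the wavefront dispersion $\lambda^2-c\lambda-1+g'(0)e^{-\lambda ch}=0$, one checks that $z_0=\lambda_1(c)$ (respectively $z_0=\lambda_*$ in the critical case $c=c_*$) corresponds to $\gamma_0=-c\,\lambda_1(c)$ (resp.\ $-c_*\lambda_*$), and estimate (\ref{O}) therefore yields
\[
u(t,x)\;\le\;U(t,x)\;\le\;C\,t^{-1/2}\,e^{\lambda_1(c)(x+ct)},\qquad u(t,x)\;\le\;C\,t^{-1/2}\,e^{\lambda_*(x+c_*t)}
\]
in the two cases, provided the borderline integrability $\int e^{-z_0 y}|u_0(y)|\,dy<\infty$ is secured. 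The latter (which is only marginal at $z_0=\lambda_1(c)$ or $\lambda_*$) is handled by splitting $u_0$ into a left piece, dominated by a direct pure-exponential super-solution $A\,e^{\lambda_1(c)(x+ct)}$ (respectively with $\lambda_*$), and a right piece to which (\ref{O}) applies as stated, the dominant $t^{-1/2}$ correction persisting.

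Part (b) is then immediate. If $u(t,m_\beta^-(t,u_0))=\beta$, the critical bound gives $\beta\le C\,t^{-1/2}\,e^{\lambda_*(m_\beta^-(t,u_0)+c_*t)}$, hence
\[
m_\beta^-(t,u_0)+c_*t\;\ge\;\tfrac{1}{2\lambda_*}\log t\;+\;\tfrac{1}{\lambda_*}\log(\beta/C)\;\longrightarrow\;+\infty,
\]
so $m_\beta^-(t,u_0)+c_*t$ is unbounded for every $\beta>0$. For part (a), the same pointwise estimate, evaluated at $x=-ct-R$, produces $u(t,-ct-R)\le C\,t^{-1/2}\,e^{-\lambda_1(c)R}<\beta$ once $R$ is large, giving the lower bound $m_\beta^-(t,u_0)+ct\ge -R$.

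It remains, for part (a), to establish the matching upper bound $m_\beta^-(t,u_0)+ct\le R'$. Using $u_0(s,\cdot)\sim\phi_c$ uniformly in $s$ together with $\inf_s\liminf_{x\to+\infty}u_0(s,x)>0$, I place under $u_0$ a non-negative compactly supported bump $\psi$ of height $\delta>0$ on some interval, reinforced at the left tail by a shifted pure-exponential profile of rate $\lambda_1(c)$ inherited from the asymptotics of $\phi_c$. A Weinberger-type spreading argument for delayed KPP equations (in the spirit of \cite{LZ,YZ}), carried out in the linearised regime where \textbf{(M)} yields $g'(u)\approx g'(0)$, then shows that $u(t,\cdot)\ge\beta_0$ somewhere in a bounded interval centred near $-ct$, for a threshold $\beta_0=\beta_0(g)>0$ fixed by the construction. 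The main obstacle is precisely this sub-solution step: because \textbf{(M)} does not impose monotonicity on $g$, the classical wavefront-stability theory is unavailable, and $\beta_0$ must be chosen small enough that $u$ stays in the KPP-linear regime throughout the evolution of the sub-solution's support, which is the intrinsic reason for the restriction to levels $\beta\in(0,\beta_0]$ with $\beta_0$ depending on $g$.
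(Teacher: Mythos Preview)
Your argument for (b) has a real gap. Estimate (\ref{O}) in Theorem~\ref{sba} requires $\int_{\R}e^{-\lambda_* y}|u_0(y)|\,dy<\infty$, and this fails at $-\infty$ under the sole hypothesis $u_0\le A e^{\lambda_* x}$: the weighted density $e^{-\lambda_* y}u_0(y)$ is merely bounded there, not integrable. Your proposed fix---bounding a left piece by the exact linear solution $A e^{\lambda_*(x+c_*t)}$ and a right piece by $Ct^{-1/2}e^{\lambda_*(x+c_*t)}$---does not recover the $t^{-1/2}$ factor: the combined bound is $(A+Ct^{-1/2})e^{\lambda_*(x+c_*t)}$, and evaluating at $m_\beta^-$ yields only $m_\beta^-(t,u_0)+c_*t\ge \lambda_*^{-1}\log\bigl(\beta/(A+o(1))\bigr)$, a finite lower bound, not unboundedness. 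The linear comparison is simply too crude here, because $Ae^{\lambda_*(x+c_*t)}$ is an \emph{exact} solution of the linearised equation.

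The paper's proof proceeds quite differently, and the key idea you are missing is the use of monotone envelopes of $g$. One introduces $\bar g(w)=\max_{[0,w]}g$ and a monotone $\underline g\le g$ with $\bar g'(0)=\underline g'(0)=g'(0)$; the solutions $\bar w,\underline w$ of (\ref{nli1}) with these monotone nonlinearities sandwich $w$ by \cite[Lemma 16]{S}, and since they share the same linearisation at $0$ they share the minimal speed $c_*$. For (a), the wavefront-stability result \cite[Theorem~2]{STR1} applied to the \emph{monotone} equations gives $\underline\phi_c(x+ct)-\beta/2\le w(t,x)\le \bar\phi_c(x+ct)+\beta/2$ for large $t$, whence both bounds on $m_\beta^-(t,u_0)+ct$ follow by evaluation; this is the concrete mechanism behind the level $\beta_0(g)$, replacing your vague spreading sketch. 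For (b), one dominates $u_0$ by an initial datum $\tilde u_0$ with the \emph{critical} wavefront asymptotic $\tilde u_0(x)\sim -Ax\,e^{\lambda_* x}$ (note the extra polynomial factor, which makes $\tilde u_0\ge u_0$ near $-\infty$), and then \cite[Corollary~1]{STR1} for the $\bar g$-equation yields $w(t,x)\le(1+\beta)\bar\phi_{c_*}(x+c_*t+b')$; boundedness of $m_\beta^-+c_*t$ would force $\beta\le(1+\beta)\bar\phi_{c_*}(C+b')$, a contradiction for suitable $b'$. It is the extra $|z|$ in the critical wavefront profile, not a $t^{-1/2}$ from the linear equation, that drives the argument.
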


\begin{proof}

\noindent
 (a) Let us introduce the monotone function $\bar{g}(w)=\max_{x\in[0,w]}g(x)$. It is simple to check that $\bar{g}$ satisfies {\rm\bf(M)} 
 with a positive equilibrium $\bar{\kappa}:=\max_{w \in[0,\kappa]}g(w)$ and  $L_{\bar{g}}:=\sup_{u,v\in\R_+, u\neq v}|\bar{g}(u)-\bar{g}(v)|/|u-v|=g'(0)$. 
 Now, if we denote by $\bar{w}(t,x)$ the solution to (\ref{nli}) with initial data $u_0$ and with  $g=\bar{g}$ it follows from  \cite[Lemma 16]{S} that
\begin{eqnarray}\label{sup}
w(t,x)\leq \bar{w}(t,x) \qquad  \hbox{for all $(t,x) \in[-h,\infty)\times\R$.}
\end{eqnarray}  
Denote by  $\underline{\kappa}=\min_{w\in[\kappa,\bar{\kappa}]}g(w)$. Hence, it follows from the hypothesis {\rm\bf(M)} that one can find an increasing function
$\underline{g}$ satisfying  {\rm\bf(M)} with a fixed point $\underline{\kappa}>0$  and $L_{\underline{g}}=L_{g}=g'(0)$ such that $\underline{g}(w)\leq g(w)$ for all
$w\in[\kappa,\bar{\kappa}]$ (e.g., one can take $\underline{g}$ close to the function $g_0(w):=\min\{w,\underline{\kappa}\}$ in the norm of $C^1(\R_+)$). 
Then, if $\underline{w}(t,x)$ denotes the solution of  (\ref{nli1}) with
initial data $u_0$ and with $g=\underline{g}$ then, using  \cite[Lemma 16]{S}, we have
\begin{eqnarray}\label{inf}
\underline{w}(t,x)\leq w(t,x) \qquad  \hbox{for all $(t,x) \in[-h,\infty)\times\R$.}
\end{eqnarray}
Since $\bar{g}$ and $\underline{g}$ are monotone functions satisfying the KPP condition  then the linear speed $c_*$ is the minimal speed for the existence of wavefronts of (\ref{nli1}) 
for $g=\bar{g}$ and $g=\underline{g}$, respectively. Consequently, for $\bar{g}$ and $\underline{g}$ in (\ref{nli1}) there exist  monotone waves $\bar{\phi}_c$ and $\underline{\phi}_c$, respectively, such that $u_0(s, x)\sim\phi_c(x)\sim \bar{\phi}_c(x)\sim\underline{\phi}_c(x)$ uniformly on $s\in[-h,0]$. Therefore,  by \cite[Theorem 2]{STR1} (without restriction on the strict monotonicity of $\bar{g}$) we conclude 
 $[\bar{w}(t,\cdot-ct)-\bar{\phi}_c]/\bar{\phi}_c\rightarrow 0$ and $[\underline{w}(t,\cdot-ct)-\underline{\phi}_c]/\underline{\phi}_c\rightarrow 0$ for $t\to+\infty$. So that,  if we set $\beta_0:=\kappa/2$, from  (\ref{sup}), (\ref{inf}) we have, 
\begin{eqnarray}\label{sand}
\underline{\phi}_c(x+ct)-\beta/2\leq w(t,x)\leq \bar{\phi}_c(x+ct)+\beta/2 \qquad \hbox{for all $(t,x) \in [T_0,\infty]\times\R$,}
\end{eqnarray}
for some  $T_0=T_0(\beta,u_0)$. Now, notice that if we set $x=m^-_{\beta}(t;u_0)$ in (\ref{sand}) we have that  $m_{\beta}^-(t,u_0)+ct$ must be bounded since if there exists a sequence $\{t_n\}$ such that $m^-_{\beta}(t_n;u_0)+ct_n\to-\infty$ then evaluating  (\ref{sand}) at $x=m^-(t_n; u_0)$ we have $\beta\leq \beta/2$ a contradiction. Similarly if  there exists a sequence  $\{t_n\}$ such that $m_{\beta}^-(t_n; u_0)+ct_n\to+\infty$ in (\ref{sand}) we have $\kappa-\beta/2\leq \beta$ a contradiction.

\bigskip
\noindent
(b)  Assume there exists $C>0$ such that: $|m^-_{\beta}(t,u_0)+c_*t|<C$ for all  $t>0$. Then consider a wavefront  $\bar{\phi}_{c_*}(x)\sim -Axe^{\lambda_* x}$, and using the same notation 
as in (a) consider $b'\in\R$ such that: $(1+\beta)\bar{\phi}_{c_*}(C+b')<\beta$. 
{Next, take $x_0>-1$ such that $u_0(s,x)\leq Ae^{\lambda_* x_0}$ for all $(s,x)\in[h,0]\times\R$ and denote by $\tilde{w}$ the solution to (\ref{nli1}) with this initial data
$$
\tilde{u}(s,x):=\left\{
\begin{array}{lll}
-Axe^{\lambda_*x},& x<- 1\\ 
Ae^{\lambda_*x},& -1\leq x\leq x_0\\
Ae^{\lambda_*  x_0} & x>x_0
\end{array}\right.
$$ 
Clearly $u_0(s,x)\leq \tilde{u}_0(s,x)$ for all $(s,x)\in[-h,0]\times\R$, $\tilde{u}_0(s,x)\sim\bar{\phi}_{c_*}$ uniformly on $s\in[-h,0]$, $\tilde{u}_0(s,\cdot)$ is locally Holder continuous uniformly on $s\in[-h,0]$ and $$\inf_{s\in[-h,0]}\liminf_{x\to+\infty}\tilde{u}_0(s,x)>0.$$ Again, by using  (\ref{sup}) and  \cite[Corollary 1 (inequality (6))]{STR1} we conclude that there exists  
$T_3=T_3(\beta,b')$ such that,
\begin{eqnarray}
w(t,x)\leq \tilde{w} (t,x)\leq(1+\beta)\bar{\phi}_{c_*}(x+c_*t+b') \hbox{for all $(t,x) \in [T_3,\infty]\times\R$.}
\end{eqnarray}
From here, choosing    $x=m^-_{\beta}(t;u_0)$, we arrive at
$$
\beta\leq (1+\beta)\bar{\phi}_{c_*}(m^-_{\beta}(t;u_0)+c_*t+b')\leq  (1+\beta)\bar{\phi}_{c_*}(C+b')\quad\hbox{for all}\quad t>T_3,
$$
which contradicts the election of $\beta$.} 

\end{proof}

\bigskip

\begin{rem} 
By similar arguments, using the stability of semi--wavefronts in the non--local case (recently obtained by one of us \cite{S2}) one can show that  (a) and (b) also hold 
for the equation (\ref{nli}).
\end{rem}

\bigskip
\bigskip

\noindent
Notice that in the proof of the Proposition \ref{nac} the conclusions obtained depend strongly on the stability of the wavefronts. 
In fact, the main difficulty in the present case (in contrast with the situation without delay) is that the flow associated with
(\ref{nli}) in general is not monotone if  $g$ is not increasing. 

We are interested in obtaining information on the unboundness
in case (b)  of  Proposition \ref{nac} in the non local case. 
In that case, the possible asymmetry of the kernel might give place to a different set of admissible speeds for the semi--wavefronts in comparison with the symmetric case.
More precisely, it is well known that in the local case there is a  minimal speed $c_*>0$ (i.e., $c_*$ is the smallest positive real for which there exists
a nonnegative bounded solution of  
(\ref{nli}) of the form $u(t,x)=\phi_c(x+ct)$, $\phi_c:\R\to
\R$, satisfying   $\phi_c(-\infty)=0$). Moreover, for each $c\geq c_*$ we can consider the solutions
 $\psi_{-c}(x):=\phi_c(-x)$  as semi-wavefronts with speed $-c$. In that way we obtain a symmetric set of admissible speeds: $(-\infty,-c_*]\cup[c_*,+\infty)$. 

\bigskip
\noindent
Now, if we take the kernel $k_0$ satisfying,  

\vspace{3mm}
 
\noindent  
{\rm\bf($K_0$)} The kernel $k_0$ satisfies $k_0(\cdot)\geq 0, \quad\int_{\R}k_0(z)dz=1$ and, for given $a<0<b$ 
\begin{eqnarray*}
 \int_{\R}k_0(z)e^{\lambda z}dz<\infty \ \hbox{for all} \ \lambda\in(a,b).
\end{eqnarray*}
Then, by  \cite[Theorem 18]{GPT}, the set of admissible speeds is given by  $(-\infty, c_*^-]\cup[c_*^+,+\infty)$, where the speeds  
 $c_*^-<c_*^+$ are not necessarily opposed to each other (i.e., $c_*^- \neq - c_*^+$). Moreover, there exist values $\lambda_*^-<0<\lambda_*^+$ for which the curves 
\begin{eqnarray*}
f_1^{\pm}(z)=-z^2+c_*^{\pm}z+1\quad \hbox{and} \quad f^{\pm}_2(z)=g'(0)e^{-zc_*^{\pm}h}\int_{\R}k_0(y)e^{-zy}dy
\end{eqnarray*}
are tangent at $z=\lambda_*^{\pm}$ \cite[Lemma 22]{GPT}.

\bigskip
\noindent
The next result is a modest  generalization of the work of McKean \cite{Mc}. 

\begin{thm}\label{ail}
Let  $g$ satisfy {\rm\bf(M)}, $k_0$ satisfy  {\rm\bf($K_0$)}, and   $u_0$ be an initial data for  (\ref{nli}). Then we have, 

\begin{itemize}
\item[(i)]{If the initial datum satisfies  $e^{\lambda_*^+(\cdot)}u_0(\cdot)\in L^1(\R)\cap L^{\infty}(\R)$ } then there exists  $B\in\R$ such that
\begin{eqnarray}\label{McK}
 m_{\beta}^{-}(t,u_0)\geq  \frac{1}{2\lambda_*^{+}}\log(t)-c_*^{+}t+B\quad \hbox{for all} \ t>0.
\end{eqnarray}
\item[(ii)] If the initial datum satisfies  $e^{\lambda_*^-(\cdot)}u_0(\cdot)\in L^1(\R)\cap L^{\infty}(\R)$,  then there exists  $B\in\R$ such that
\begin{eqnarray}\label{McK2}
m_{\beta}^{+}(t,u_0)\leq  \frac{1}{2\lambda_*^{-}}\log(t)-c_*^{-}t+B \quad \hbox{for all} \ t>0.
\end{eqnarray}
\end{itemize}
\end{thm}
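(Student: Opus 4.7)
My strategy is to linearize via the KPP bound $g(u)\le g'(0)u$, apply the pointwise estimate (\ref{O}) of Theorem~\ref{sba} to the linearized equation, and invert the resulting decay. Let $U(t,x)$ solve the linearized problem
\[
U_t=U_{xx}-U+g'(0)\int_\R k_0(y)\,U(t-h,x-y)\,dy
\]
with the same initial datum $u_0$. Because $g\ge 0$, $g(u)\le g'(0)u$ on $\R_+$ by \textbf{(M)}, and $k_0\ge 0$, an induction over the successive intervals $[jh,(j+1)h]$ (on each of which the delayed source depends on already-controlled prior data) combined with the parabolic comparison principle gives
\[
0\le u(t,x)\le U(t,x)\quad\text{for all }(t,x)\in[0,\infty)\times\R.
\]

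The linearized equation is (\ref{dle0}) with $m=0$, $p=-1$, $k(x)=g'(0)k_0(x)$, so Theorem~\ref{sba} applies. Taking $z_0=\lambda_*^+$ (the integrability hypothesis on $u_0$ is exactly the one required by the second part of Theorem~\ref{sba}), estimate (\ref{O}) yields
\[
U(t,x)\le C\,t^{-1/2}\,e^{-\gamma_0 t}\,e^{\lambda_*^+ x},
\]
where $\gamma_0$ is the unique real solution of $-\gamma_0-(\lambda_*^+)^2+1=g'(0)\,e^{\gamma_0 h}\int_\R k_0(y)e^{-\lambda_*^+ y}\,dy$. The decisive algebraic step is the identification $\gamma_0=-\lambda_*^+ c_*^+$: the tangency $f_1^+(\lambda_*^+)=f_2^+(\lambda_*^+)$ reads
\[
-(\lambda_*^+)^2+c_*^+\lambda_*^+ +1=g'(0)\,e^{-\lambda_*^+ c_*^+ h}\int_\R k_0(y)e^{-\lambda_*^+ y}\,dy,
\]
which is precisely the defining equation for $\gamma_0$ evaluated at $\gamma_0=-\lambda_*^+ c_*^+$; uniqueness follows from the strict monotonicity of that equation in $\gamma$, the derivative of the left-hand side minus the right-hand side being $-1-g'(0)h e^{\gamma h}\int k_0(y)e^{-\lambda_*^+ y}\,dy<0$.

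Combining the previous steps, $u(t,x)\le C\,t^{-1/2}\,e^{\lambda_*^+(x+c_*^+ t)}$. Setting $x=m_\beta^-(t,u_0)$ so that $u(t,x)=\beta$, and using $\lambda_*^+>0$, taking logarithms isolates
\[
m_\beta^-(t,u_0)\ge -c_*^+ t+\frac{1}{2\lambda_*^+}\log t+B,\qquad B=\lambda_*^{+\,-1}\log(\beta/C),
\]
which is (\ref{McK}). Part (ii) is entirely symmetric: apply (\ref{O}) with $z_0=\lambda_*^-$, verify $\gamma_0=-\lambda_*^- c_*^-$ from the tangency of $f_1^-$ and $f_2^-$ at $\lambda_*^-$, and note that since $\lambda_*^-<0$ the final logarithmic inversion reverses inequality, yielding the upper bound (\ref{McK2}) on $m_\beta^+$.

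The main obstacle is the identification $\gamma_0=-\lambda_*^\pm c_*^\pm$: it is precisely the tangency of $f_1^\pm$ and $f_2^\pm$ at the critical exponent that converts the bare $e^{-\gamma_0 t}$ in Theorem~\ref{sba} into a comoving-frame exponential $e^{\lambda_*^\pm c_*^\pm t}$, turning the spatial factor $e^{\lambda_*^\pm x}$ into the correct moving-frame form $e^{\lambda_*^\pm(x+c_*^\pm t)}$ for spreading-speed analysis. The sharp $t^{-1/2}$ prefactor of (\ref{O}) is then exactly what yields the $\frac{1}{2\lambda_*^\pm}\log t$ logarithmic correction characteristic of McKean-type bounds; any cruder linear estimate without polynomial decay would recover only the ballistic $-c_*^\pm t$ term and would miss the logarithmic correction.
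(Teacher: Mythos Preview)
Your proposal is correct and follows essentially the same approach as the paper. The only cosmetic difference is that the paper (via Lemma~\ref{flem} and Lemma~\ref{alem}) first passes to the comoving frame $z=x+c_*^{+}t$ with the substitution $w=e^{-\lambda_*^{+}z}u$ and then applies (\ref{O}) with $z_0=0$, $\gamma_0=0$, whereas you stay in the laboratory frame and carry out the equivalent identification $\gamma_0=-\lambda_*^{+}c_*^{+}$ directly from the tangency $f_1^{+}(\lambda_*^{+})=f_2^{+}(\lambda_*^{+})$; both routes yield the same bound $u(t,x)\le Ct^{-1/2}e^{\lambda_*^{+}(x+c_*^{+}t)}$ and the same logarithmic inversion at $x=m_\beta^{-}(t,u_0)$.
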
 
{\begin{rem}[Logarithmic term]
In case the kernel $k_0$ is non symmetric it can happen that  $c_*^+< 0.$ If $u(t,\cdot)$ is asymptotically propagated with speed $-c_*^+$ (like backward traveling fronts, see, e.g.,  \cite[Page 16]{GPT}) the logarithmic term in
 (\ref{McK}) increases the speed $m_{\beta}^{-}(t,u_0)/t$, for large $t$, in constrast with the  local case \cite{Br}.
\end{rem}}

\begin{thm} \label{propnueva}

For a kernel $k_0$ satisfying {\rm\bf(K)} and $g$ satisfying  {\rm\bf(M)},   

\bigskip
\noindent
(i) Let $u(t,z)$ be the solution to (\ref{nli}) with initial data as in Theorem \ref{ail} (i). If $c \ge c_*^+$, then
$$
\lim_{t \to \infty, z \le - ct} u (t,z) =0. 
$$

\bigskip
\noindent
(ii) Let $u(t,z)$ be the solution to (\ref{nli}) with initial data as in Theorem \ref{ail} (ii). If $c \le c_*^-$, then
$$
\lim_{t \to \infty, z \ge  -ct} u (t,z) =0. 
$$

\bigskip
\noindent
In particular, if $c_*^+ c_*^- >0$ and the initial data has compact support then, 
$$
\lim_{t \to \infty} u (t,z) =0\quad \hbox{for all} \ z\in\R. 
$$
\end{thm}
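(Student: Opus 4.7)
The plan is to reduce the nonlinear problem to the sharp linear decay estimate (\ref{O}) of Theorem \ref{sba} via the KPP comparison principle and a moving-frame change of variables. Since $g(u)\le g'(0)u$ by (M), a standard step-by-step comparison on the intervals $[nh,(n+1)h]$ will give $0\le u(t,x)\le \bar u(t,x)$, where $\bar u$ solves the linearized equation
\begin{equation*}
\bar u_t=\bar u_{xx}-\bar u+g'(0)\,k_0*\bar u(t-h,\cdot)
\end{equation*}
with the same initial data $u_0$. It will therefore suffice to control $\bar u$ from above in each of the two regions.

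To prove (i), fix $c\ge c_*^+$ and set $v(t,x):=\bar u(t,x-ct)$. A direct computation shows that $v$ satisfies (\ref{dle2}) with kernel $g'(0)k_0$ and speed parameter $c$, equivalently it satisfies (\ref{dle0}) with $m=-c$, $p=-1$ and $k(z)=g'(0)k_0(z-ch)$; under this identification the curves $q_1,q_2$ of (\ref{esw}) become precisely $f_1^c$ and $f_2^c$. I then apply (\ref{O}) to $v$ with $z_0=\lambda_*^+$. The defining equation for $\gamma_0$ reads
\begin{equation*}
-\gamma_0+f_1^c(\lambda_*^+)=f_2^c(\lambda_*^+)\,e^{\gamma_0 h},
\end{equation*}
whose unique real root is $\gamma_0=0$ when $c=c_*^+$ by the tangency, and strictly positive when $c>c_*^+$: indeed $\partial_c f_1^c(\lambda_*^+)=\lambda_*^+>0$ while $\partial_c f_2^c(\lambda_*^+)<0$, so $f_1^c(\lambda_*^+)>f_2^c(\lambda_*^+)$, and the monotonicity argument already used in Lemma \ref{halanay} forces $\gamma_0>0$. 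In either case $\gamma_0\ge 0$, and (\ref{O}) gives $|v(t,x)|\le C\,t^{-1/2}e^{-\gamma_0 t}e^{\lambda_*^+ x}$ for all $x\in\R$, with $C$ proportional to a weighted $L^1$-norm of the initial datum.

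For $z\le -ct$ we have $x:=z+ct\le 0$, so $e^{\lambda_*^+ x}\le 1$ because $\lambda_*^+>0$; hence $u(t,z)\le \bar u(t,z)=v(t,z+ct)\le C'\,t^{-1/2}e^{-\gamma_0 t}$, which tends to $0$ uniformly in the half-line $z\le -ct$. Part (ii) is entirely symmetric: take $z_0=\lambda_*^-$, observe that $\lambda_*^-<0$ makes $e^{\lambda_*^- x}\le 1$ for $x\ge 0$, and use the tangency at $(c_*^-,\lambda_*^-)$ plus the same monotonicity to get $\gamma_0\ge 0$ for every $c\le c_*^-$. For the \emph{in particular} assertion a compactly supported $u_0$ trivially satisfies every weighted $L^1$-integrability hypothesis of (\ref{O}), so both (i) and (ii) apply; when $c_*^+c_*^->0$ the half-lines $\{z\le -c_*^+ t\}$ and $\{z\ge -c_*^- t\}$ together cover any fixed $z\in\R$ once $t$ is large enough, yielding $u(t,z)\to 0$ pointwise.

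The step I expect to be hardest is verifying the integrability $\int e^{-\lambda_*^+ y}|u_0(y)|\,dy<\infty$ required by (\ref{O}) with $z_0=\lambda_*^+$ under only the one-sided decay $e^{\lambda_*^+(\cdot)}u_0(\cdot)\in L^1\cap L^\infty$ of Theorem \ref{ail}(i): the condition is automatic for compactly supported data but may fail at $y\to-\infty$ in general, and would have to be recovered either by truncating $u_0$ from the left and absorbing the tail into the $O(t^{-1/2}e^{-\gamma_0 t})$ bound, or by a variant of (\ref{O}) that exploits only decay at $+\infty$.
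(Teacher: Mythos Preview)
Your approach is essentially the paper's: linearize via the KPP comparison (this is the paper's Lemma~\ref{alem}, packaged in Lemma~\ref{flem}), pass to a moving frame, and invoke the bound (\ref{O}) of Theorem~\ref{sba}. The one cosmetic difference is that the paper stays in the $c_*^{+}$--frame, where the tangency gives $\gamma_0=0$ for free, and then uses the spatial factor $e^{\lambda_*^{+}(z+c_*^{+}t)}\le e^{\lambda_*^{+}(c_*^{+}-c)t}$ for $z\le -ct$; you instead move to the $c$--frame and show $\gamma_0\ge 0$ by monotonicity in $c$. Both routes yield the same inequality $u(t,z)\le C\,t^{-1/2}e^{\lambda_*^{+}(c_*^{+}-c)t}$, but the paper's version spares you the monotonicity computation.

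Your closing worry about the weighted $L^1$ hypothesis is well spotted but is not a gap in your argument. What both your proof and the paper's proof actually require is $\int_{\R}e^{-\lambda_*^{+}y}|u_0(y)|\,dy<\infty$ (decay on the left), which is what one gets after the change $v_0=e^{-\lambda_*^{+}(\cdot)}u_0$ in Lemma~\ref{flem} followed by (\ref{O}) with $z_0=0$. The hypothesis printed in Theorem~\ref{ail}(i) carries the opposite sign in the exponent; this is a sign slip in the statement rather than a missing idea in the method, and no truncation trick is needed.
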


\begin{rem}
Thus, in the case  $c_*^-c_*^+> 0$ 
the associated fronts extinguish in a different manner than in the local monotone case (but without assuming a KPP condition)
(see \cite[Proposition 1.3]{STR}) or the symmetric non local monotone case (assuming  the KPP condition) \cite[Theorem 4.1]{TZ}. 
However, in a very abstract context Liang and Zhao \cite[Theorem 3.4]{LZ} has obtained Theorem \ref{propnueva} for a monotone $g$ but for general $k_0$.
 \end{rem}

{\begin{rem}\label{spropa}
Note that if for some $\sigma>0$ the initial datum $u_0\in BUC(\R)$ (i.e., is bounded and uniformly continuous) satisfies $u_0\geq\sigma$ on a ball of radius $r_{\sigma}$ 
then, by using upper and lower nonlinearities given in \cite[Lemma 4.1]{YZ} along with \cite[Theorem 3.2]{YZ} we can conclude that for all $c> c_*^+$ and $c<c_*^-$
\begin{eqnarray}\label{eps}   
\lim_{t\to+\infty}\min_{ -c't\leq z\leq ct}u(t,z)\geq \epsilon_0,
\end{eqnarray}
for some $\epsilon_0>0.$ Moreover, the restriction on the radius  $r_{\sigma}$  can be dropped due to the fact that  the KPP condition implies the 
{\it sub homogenous} condition assumed in \cite[Theorem 3.4 part (2)]{LZ} and also because the  comparison Lemma \ref{alem}, the inequality (\ref{eps}) 
is valid for the class of exponentially bounded  initial data. Finally, notice that these propagation results admit a suitable interpretation in \cite{YZ} when the 
minimal wave speeds exist, i.e., when $g^2$ has only one positive fix point and $k$ satisfies the condition (K2) (see \cite[Theorem 4.4]{YZ}) 
while this limitation is overcome in \cite{GPT} by using the minimal  semi-wavefront speeds (see \cite[Theorem 18]{GPT}).    
\end{rem}}

\bigskip
\bigskip
\noindent
Theorem   \ref{ail} and Theorem \ref{propnueva}  follow from the following lemma.

\begin{lem}\label{flem}
 Let us consider  (\ref{dle0}) with
 $$
 m_{c_*^{\pm}}=2\lambda_*^{\pm} -c_*^{\pm}, \ q_{c_*^{\pm}}=-(\lambda_*^{\pm})^{2}+c_*^{\pm}\lambda_*^{\pm}+1,
 $$ 
 and
 $$
 k(x)=g'(0) e^{-\lambda_*^{\pm} c_*^{\pm}h} k_0(x+c_*^{\pm}h)e^{-\lambda_*^{\pm}\, x}
 $$
 If  $v^{\pm}(t,z)$ is the solution of (\ref{dle0}), with this choice of coefficients,  and with initial data  $v_0$ we have the following, 
 
 \bigskip
 \begin{itemize} 
\item[(i)]
If the initial data $v_0$ is as in Theorem \ref{ail} (i) then, 
\begin{eqnarray}\label{inqf}
u(t,z-c_*^{+}t)\leq e^{\lambda_*^+z}v^+(t,z)\quad \hbox{for all  $t>0, z\in\R$.}
\end{eqnarray}
Moreover, $v^+(t,z)$ satisfies (\ref{asym}) with $\gamma_m=0$ and $z_m=\lambda_*^+$.
 
\bigskip 
 
 \item[(ii)]
If the initial data $v_0$ is as in Theorem \ref{ail} (ii) then, 
\begin{eqnarray}\label{inqf1}
u(t,z-c_*^-t)\leq e^{\lambda_*^-z}v^-(t,z) \quad \hbox{for all  $t>0, z\in\R$.} 
\end{eqnarray}
 Moreover, $v^-(t,z)$ satisfies (\ref{asym}) with $\gamma_m=0$ and $z_m=\lambda_*^-$.
 \end{itemize}
 \end{lem}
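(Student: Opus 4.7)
The proof should combine a KPP-type linearization with a moving-frame change of variables that reduces (\ref{nli}) to an instance of (\ref{dle0}) to which Theorem \ref{sba} applies. By the KPP bound $g(u) \leq g'(0) u$ from hypothesis {\bf(M)} and a standard comparison principle (for instance \cite[Lemma 16]{S}), the solution $u$ of (\ref{nli}) is dominated by the solution $\tilde u$ of the linearized delay equation
\[
\tilde u_t(t,x) = \tilde u_{xx}(t,x) - \tilde u(t,x) + g'(0) \int_{\R} k_0(y)\, \tilde u(t-h, x-y)\, dy
\]
with the same initial data $u_0$. An appropriate transformation of $\tilde u$ then produces a solution of (\ref{dle0}) with the coefficients listed in the lemma, whose long-time behavior is controlled by Theorem \ref{sba}.

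For case (i), the plan is to set $v^+(t, z) := e^{-\lambda_*^+ z}\, \tilde u(t, z - c_*^+ t)$, that is, a moving frame at speed $c_*^+$ together with an exponential tilt by $e^{-\lambda_*^+ z}$. A direct chain-rule computation in the local terms, combined with the substitution $\zeta \mapsto z - \zeta - c_*^+ h$ inside the nonlocal integral, shows that $v^+$ satisfies (\ref{dle0}) with the drift $m_{c_*^+} = 2\lambda_*^+ - c_*^+$, zero-order coefficient determined by $q_{c_*^+}$, and convolution kernel $k$ as stated. The inequality (\ref{inqf}) is then immediate from $u \leq \tilde u$, since by construction $\tilde u(t, z - c_*^+ t) = e^{\lambda_*^+ z}\, v^+(t, z)$, so $u(t, z - c_*^+ t) \leq e^{\lambda_*^+ z}\, v^+(t, z)$.

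To apply Theorem \ref{sba} to $v^+$, I would verify hypothesis {\bf(T)} for the transformed equation. A substitution in the definitions of $q_1, q_2$ shows that they are horizontal translates of $f_1^+, f_2^+$ by $\lambda_*^+$, so the tangency of $f_1^+$ and $f_2^+$ at $\lambda_*^+$, which is precisely the defining property of the critical pair $(c_*^+, \lambda_*^+)$ by \cite[Lemma 22]{GPT}, translates into the tangency required by {\bf(T)} with $\gamma_m = 0$. Theorem \ref{sba} then yields the asserted asymptotic of $v^+$. Part (ii) follows from the symmetric construction $v^-(t, z) := e^{-\lambda_*^- z}\, \tilde u(t, z - c_*^- t)$ at the critical pair $(c_*^-, \lambda_*^-)$. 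The main obstacle is bookkeeping: tracking signs and shifts carefully through the change of variables, and verifying that the decay assumptions on $u_0$ in Theorem \ref{ail} translate into the integrability $\int_{\R} e^{-z_m y} |v_0^\pm(s, y)|\, dy < \infty$ required by Theorem \ref{sba}, uniformly in $s \in [-h, 0]$.
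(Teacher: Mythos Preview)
Your proposal is correct and follows essentially the same strategy as the paper: a moving-frame/exponential-tilt change of variables together with a KPP comparison, followed by an invocation of Theorem \ref{sba} via the tangency of $f_1^\pm,f_2^\pm$ at $\lambda_*^\pm$. The only cosmetic difference is the order of the two steps. You linearize first (compare $u$ with $\tilde u$ at the level of (\ref{nli})) and then transform $\tilde u$ into $v^\pm$, whereas the paper transforms the nonlinear solution $u$ directly, obtaining a nonlinear equation for $w(t,z)=e^{-\lambda_*^\pm z}u(t,z-c_*^\pm t)$, and then applies its Appendix comparison Lemma \ref{alem} (a Phragm\`en--Lindel\"of argument requiring the exponential bound $u_0\le Ne^{\lambda x}$) to compare $w$ with the linear solution $v^\pm$. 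Since the change of variables is linear, the two orderings are equivalent; your observation that the resulting $q_1,q_2$ are horizontal translates of $f_1^+,f_2^+$ by $\lambda_*^+$, so that {\bf(T)} holds with $\gamma_m=0$, is exactly what the paper uses as well.
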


\begin{proof} 
(i) Setting  $w(t,z):=e^{-\lambda_*^+z}u(t,z-c^+_*t)$ then, for all $(t,z)\in\R_+\times\R$, $w$ satisfies,
\begin{eqnarray*}\label{nla}
w_t(t,z)=w_{zz}(t,z)+m_{c_*^+}w_z(t,z)-q_{c_*^+}w(t,z)+e^{-\lambda_*^+z}\int_{\R}k_0(z-y)g(e^{\lambda_*^+(y-c_*^+h)}w(t-h,y-c_*^+h))dy.
\end{eqnarray*}
hence (\ref{inqf}) follows from Lemma \ref{alem}.

\noindent
{Finally, since $f_1^+(z)$ and $f_2(z)$ are tangent at $z=\lambda_*^{\pm}$ then by applying Theorem \ref{sba} one obtains (\ref{asym}) with $\gamma_m=0$ and $z_{m}=\lambda^{\pm}_{*}.$}
 
 \bigskip
 \noindent
(ii) This case is completely analogous to (i). 
\end{proof}

\vspace{2mm}

\begin{proof}[Proof  of Theorem \ref{ail}]

\bigskip
\noindent
(i) By using (\ref{inqf}) and (\ref{O}) we have
\begin{eqnarray}\label{rte}
 u(t,z-c_*^{+}t)\leq e^{\lambda_*^+z}v^+(t,z)\leq Ce^{\lambda_*^+(z-\frac{\log t}{2\lambda_*^+ })}\quad \hbox{for all  $t>0, z\in\R$.}
 \end{eqnarray}
and evaluating (\ref{rte}) at $z=c_*^+t+m^-_{\beta}(t)$ we obtain
\begin{eqnarray}\label{rta}
0<\beta\leq C e^{\lambda^+_*M(t)}\quad \hbox{for all} \ t>0
\end{eqnarray}
where $M(t)=c^+_*t+m^-_{\beta}(t)-\frac{\log t}{2\lambda^+_*}$. But, if there exist $\{t_n\}$ such that $t_n\to+\infty$ and $M(t_n)\to-\infty$ we obtain contradiction in (\ref{rta}) therefore $M(t)$ is bounded below which implies (\ref{McK}).

\bigskip
\noindent
(ii) Similarly to (i), by using (\ref{rta}) since $\lambda_*^-<0$ we conclude that $M^*(t):=c^-_*t+m^+_{\beta}(t)-\frac{\log t}{2\lambda^-_*}$ must be  bounded above. 
\end{proof}

\bigskip
\bigskip
We conclude this section with the proof of the Proposition \ref{propnueva}

\begin{proof}[Proof of Theorem \ref{propnueva}]

\bigskip
\noindent
(i) By (\ref{rte})

\begin{eqnarray}\label{rto}
 u(t,z)\leq Ce^{\lambda_*^+(z+c_*^+t)}\leq Ce^{(c_*^+-c)t}\quad \hbox{for all}  \ t>0, z\leq -ct.
 \end{eqnarray}
which implies the assertion. 

\bigskip
\noindent
The proof of (ii) is analogous.
\end{proof}
 
\section*{Acknowledgements}

It is a pleasure to thank Sergei Trofimchuk for many valuable discussions. 
The work of RB has been supported by Fondecyt (Chile) Projects  \# 114--1155, and  \# 116--0856. 
The work of AS has been supported by Fondecyt (Chile) Project \# 316--0473. We thank the anonymous referees
for their many insightful suggestions that helped us to improve the manuscript.

\bigskip

\section*{Appendix}

\bigskip
\noindent
We consider the nonlinear equation 
\begin{eqnarray}\label{dle1}
u_t(t,x) & =u_{xx}(t,x)+ mu_x(t,x)+pu(t,x)+\int_{\mathbb{R}} k_0(x-y) g(u (t-h, y))\, dy.
\end{eqnarray}
The following Lemma can be obtained from \cite[Proposition 3.1 and Lemma 5.6]{S2}.
\begin{lem}\label{alem}
Let us consider  $k_0$ satisfying {\bf (K)}  and $g$ satisfying the KPP condition $g(u)\leq g'(0)u$ for all $u\geq 0$. 
Denote by  $u(t,x)$  the solution to (\ref{dle1})  generated by some initial data $u_0$ and 
{$v(t,x)$ the solution (\ref{dle1}) with $g(u)=g'(0)u$ generated by $v_0$}. If  for some $\lambda\in (a,b)$ and $N>0$ 
  
   \begin{eqnarray}\label{vv}
 0\leq u_0(s,x)\leq v_0(s, x) \leq N e^{\lambda x}\ \hbox{for all}\ (s, x) \in  [-h, 0] \times\R.
 \end{eqnarray}
 then for each $n\in\Z_+$
 \begin{eqnarray}\label{estf}
 0\leq {u(t,x)\leq v(t, x)\leq N'\theta^{n} e^{\lambda x}\ \hbox{for all}\ (t, x) \in  [(n-1)h, nh] \times\R},
  \end{eqnarray} 
  for some $N'>0$ and $\theta>1$.
\end{lem}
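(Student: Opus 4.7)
The argument proceeds by the method of steps on the intervals $I_n = [(n-1)h, nh]$, exploiting the linearity of the equation for $v$ together with the comparison afforded by the KPP condition. The plan has three ingredients, carried out in the following order.

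First, I would prove $0 \leq u(t,x) \leq v(t,x)$ by induction over $n$. On each $I_n$, the equation (\ref{dle1}) becomes a linear inhomogeneous parabolic equation in which the delay integral $f_u(t,x) := \int_{\R} k_0(x-y)\, g(u(t-h,y))\, dy$ is a known source, determined by the values of $u$ on $I_{n-1}$. Using $g(\R_+) \subset \R_+$ (implicit in the setup via {\bf (M)}), the positivity of $k_0$, and the non-negativity of the heat kernel associated with $\partial_t - \partial_{xx} - m\partial_x - p$, the Duhamel representation yields $u \geq 0$ directly. For the comparison, writing $w := v - u$ produces a linear inhomogeneous parabolic equation whose source is
\[
\int_{\R} k_0(x-y)\bigl[g'(0)\,v(t-h,y) - g(u(t-h,y))\bigr]\,dy = g'(0)\int_{\R} k_0(x-y)\,w(t-h,y)\,dy + R(t,x),
\]
where $R(t,x) := \int_{\R} k_0(x-y)\bigl[g'(0)u(t-h,y) - g(u(t-h,y))\bigr]\,dy \geq 0$ by the KPP assumption. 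The inductive hypothesis $w(t-h,\cdot) \geq 0$ on $I_n$ then gives $w \geq 0$ on $I_n$ via Duhamel.

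Second, for the exponential bound on $v$, I would use the separable supersolution $V(t,x) := A(t)\,e^{\lambda x}$. Substituting into the linearized equation (i.e.\ with $g(u)=g'(0)u$) shows that $A$ satisfies the linear delay ODE
\[
A'(t) = \bigl(\lambda^{2} + m\lambda + p\bigr)\, A(t) + g'(0)\,K(\lambda)\,A(t-h), \qquad K(\lambda) := \int_{\R} k_0(z)\,e^{-\lambda z}\,dz,
\]
which is finite by {\bf (K)}. Taking $A(s) \equiv N$ on $[-h,0]$ and iterating a standard Gronwall estimate on each interval $I_n$ yields $A(t) \leq N'\theta^{n}$ for some $\theta > 1$ depending on $\lambda^{2}+m\lambda+p$, $g'(0)$, $K(\lambda)$, and $h$. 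The inequality $v \leq V$ then follows by the same Duhamel comparison as in the first step, once one verifies that $v_0(s,\cdot) \leq V(s,\cdot)$ on $[-h,0]$ (which is (\ref{vv})).

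The main obstacle is justifying the comparison principle on the unbounded spatial domain $\R$ for functions with exponential growth $e^{\lambda x}$. My plan is to work in the weighted space with weight $e^{-\lambda x}$: the change of variable $u \mapsto e^{-\lambda x} u$ converts (\ref{dle1}) into a linear parabolic equation with bounded drift and reaction coefficients and a non-local term with kernel $k_0(x-y)e^{-\lambda(x-y)}$, still in $L^1$ by {\bf (K)}. In this weighted setting all relevant quantities are bounded, the standard comparison principle applies, and the delay is handled step-by-step on the intervals $I_n$, exactly following the framework of \cite[Proposition 3.1 and Lemma 5.6]{S2}, from which the conclusion follows.
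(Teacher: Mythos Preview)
Your proposal is correct and shares the essential scaffolding with the paper's proof: the weighted change of variables $u\mapsto e^{-\lambda x}u$ to reduce to bounded data, and the step-by-step induction over the intervals $[(n-1)h,nh]$. The differences are in ordering and in the technical tools chosen. The paper first establishes the exponential bound (for both $u$ and $v$) by reducing, after the weighted and drift changes of variable, to an inhomogeneous heat equation and estimating the $L^\infty$ norm directly from the Duhamel formula; only afterward does it prove $u\le v$ by applying the Phragm\`en--Lindel\"of maximum principle (Protter--Weinberger) to $\delta=u-v$, for which the exponential growth bound is a prerequisite. You reverse the order: you obtain $u\le v$ first via the Duhamel representation and positivity of the heat kernel in the weighted space, and then get the exponential bound via a separable supersolution $A(t)e^{\lambda x}$ with $A$ solving a scalar delay ODE. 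Your route is slightly more elementary in that it avoids invoking Phragm\`en--Lindel\"of, while the paper's route makes the role of the growth hypothesis in the comparison step more explicit. Both arguments are equivalent in strength and yield the same constants up to harmless factors.
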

\begin{proof}
 We consider $g$ only satisfying the KPP condition and $\lambda\in\R$. Then, by making the change of variables $\bar{u}(t,x):=u(t,x)e^{-\lambda x}$ the equation (\ref{dle1}) is transformed to
 \small\begin{eqnarray}\label{ale1}
\bar{u}_t(t,x)=\bar{u}_{xx}(t,x)+(2\lambda+m)\bar{u}(t,x)-q_1(\lambda)\bar{u}(t,x)+\int_{\R}k'(x-y)d(t,y)\bar{u}(t-h,y)dy, 
\end{eqnarray}\normalsize
where, $k'(y)=k_0(y)e^{-\lambda y}$ and ${d(t,y)=g(u(t-h,y))/u(t-h,y)}$. 
Next,  by the change of variable $\bar{\bar{u}}(t,x):=\bar{u}(t,x-(2\lambda+m) t)e^{q_1(\lambda) t}$ the equation (\ref{ale1}) is reduced
to the inhomogeneous heat equation, 
\begin{eqnarray}\label{He}
{\bar{\bar{u}}_t}(t,x)=\bar{\bar{u}}_{xx}(t,x)+f(t,x) \quad \hbox{for all} \quad (t,x)\in\R_+\times\R^d
\end{eqnarray}
where $$f(t,x)=e^{q_1(\lambda)h}\int_{\R^d}k'(y)d(t,x-(2\lambda+m) t-y)\bar{\bar{u}}(t-h,x-y-(2\lambda+m) h)dy.$$
{By (\ref{vv}) we get $\bar{u}(s,\cdot),\bar{\bar{u}}(s,\cdot)\in C([-h,0], L^{\infty}(\R))$ therefore $f(t,\cdot)\in C([0,h], L^{\infty}(\R))$. Next, } by denoting $\Gamma_t$ the one-dimensional heat kernel,  we have
\begin{eqnarray}\label{heat}
\bar{\bar{u}}(t)= \Gamma_t\ast \bar{\bar{u}}(0)+\int_0^t\Gamma_{t-s}\ast f(s,\cdot)ds
\end{eqnarray}
So that, for $t\in(0,h]$
\begin{eqnarray*}
||\bar{\bar{u}}(t)||_{L^{\infty}(\R)}&\leq& ||u(0)||_{L^{\infty}(\R)}+h\sup_{s\in[-h,0]}||f(s,\cdot)||_{L^{\infty}(\R)}\\
&\leq& (1+h{g'(0)e^{q_1(\lambda)h}||k'||_{L^1})||\bar{\bar{u}}_0||_{C([-h,0], L^{\infty}(\R))}},
\end{eqnarray*}
By defining $\theta_0:=1+he^{q_1(\lambda)h}||d_2||_{L^{\infty}}||k'||_{L^1}$ and repeating the argument with de initial data $u(h+s,x), u(2h+s,x)...$ we conclude
$$
\bar{\bar{u}}(nh+s, x)\leq {\theta_0^{n}} \ ||\bar{\bar{u}}_0||_{C([-h,0], L^{\infty})}\ \hbox{for all}\ (s, z) \in  [-h, 0] \times\R,
$$
which implies 
\begin{eqnarray}\label{exbo}
u(nh+s, x)\leq N'{\theta^{n}} e^{\lambda z}\ \hbox{for all}\ (s, z) \in  [-h, 0] \times\R,
\end{eqnarray}
with $N'=Ne^{2|q_1(\lambda)|h}\ \theta_0 $ and $\theta=\theta_0 \ {e^{q_1(\lambda)h}}$. Therefore, we conclude that $u(t,z)$ and $v(t,z)$ (for $v$ we take $g(v)=g(0)v$) are exponentially bounded for each $t\geq -h$ and uniformly exponentially bounded for $t$ on any compacts, i.e., for each $n\in\Z_+$ we have 
$$ 
u(t,x), v(t,x)\leq N'\theta^ne^{\lambda x}\quad \hbox{for all}\quad (t,x)\in[(n-1)h, nh]\times\R.
$$ 
Finally, by defining $\delta(t,x):=u(t,x)-v(t,x)$ and 
$$
(\mathcal{L}\delta) (t,x)=\delta_{xx}(t,x)- \delta_{t}(t,x)+m\delta_{x}(t,x)+p\delta(t,x)
$$
we have
\begin{eqnarray*}
\mathcal{L}\delta(t,x)&=&\int_{\R}k(y)[g'(0)v(t-h,x-y)-g(u(t-h,x-y))]dy\\
& \geq &\int_{\R}k(y)[g'(0)v(t-h,x-y)-g'(0)u(t-h,x-y)]dy\geq 0 
\end{eqnarray*}
for all $ t\in [0,h], \ x \in \R$, and (\ref{vv}) implies $\delta(0,x)\leq 0$ for $x\in\R$. So that, since by (\ref{exbo}) the function $\delta(t,x)$ is exponentially bounded then
Phragm\`en-Lindel\"of principle from \cite[Chapter 3, Theorem 10]{PW}  implies $\delta(t,x)
\leq 0$ for all  $t \in [0,h], \ x \in \R$. Then, by applying the same argument to $\delta(h+s,x), \delta(2h+s,x)...$ we conclude (\ref{estf}). 

\end{proof}


\begin{thebibliography}{99}

\bibitem{AGT} Aguerrea M, Gomez C and Trofimchuk S 2012 On uniqueness of semi-wavefronts {\it Math. Ann.} \textbf{354} 73-109
\bibitem{BP} B\'atkai A and Piazzera S 2001 Semigroups and Linear Partial Differential Equations with Delay {\it J. Math. Anal. Appl.} \textbf{264} 1-20  

\bibitem{BNPR}   Berestycki H, Nadin G, Perthame B and Ryzhik L 2009 The non-local Fisher-KPP equation: travelling waves and steady states {\it Nonlinearity}  \textbf{22} 2813-2844

\bibitem{BHR} Bouin E, Henderson C and Ryzhik L  2017  The Bramson delay in the non-local Fisher-KPP equation {\it submitted} arXiv: 1710.03628
\bibitem{Br}  Bramson MD 1978 Maximal displacement of branching Brownian motion {\it Comm. Pure Appl. Math} \textbf{31}  531-581

\bibitem{Br2} Bramson MD 1983 The convergence of solutions of the Kolmogorov nonlinear diffusion equations to travelling waves {\it Mem. Amer. Math. Soc} \textbf{44} No. 85  

\bibitem{CMYZ} Chern IL, Mei M, Yang XF, Zhang QF  20015 Stability of non-monotone critical traveling waves for reaction--diffusion equations with time-delay
{\it J. Diff. Eqns} \textbf{259} 1503-1541.

\bibitem{D} Datko R 1978 Representation of Solutions and Stability of Linear Differential-Difference Equations in a Banach Space {\it J.Diff. Eqns} \textbf{29} 105-166 

\bibitem{ES} Ebert U and van Saarloos W 2000 Front propagation into unstable states: universal algebraic conver-
gence towards uniformly translating pulled fronts {\it Phys. D Nonlin. Phenom.} {\bf 146} (1) 1--99 


\bibitem{FN} Fischer A and Neerven JMAM 1998 Robust Stability of $C_0$- semigroups and an appilcation to stability of delay equations {\it J. Math. Anal. Appl.} \textbf{226} 82-100


\bibitem{AF} Friedman A 1964 {\it Partial Differential Equations of Parabolic Type} ( Englewood Cliffs, NJ:
Prentice-Hall)


\bibitem{GM} Gomez A and Morales N 2017, An approximation to the minimum traveling wave for the delayed diffusive Nicholson's blowflies equation {\it Mathematical Methods in the 
Applied Sciences} \textbf{40} 5478--5483

\bibitem{GPT} Gomez C, Prado H and Trofimchuk S 2014 Separation dichotomy and wavefronts for a nonlinear convolution equation {\it J. Math. Anal. Appl.} \textbf{420}  1--19

\bibitem{Hal966} Halanay A 1966, Differential Equations, Academic Press, New York.

\bibitem{HNRR1}Hamel F, Nolen J, Roquejoffre JM and Ryzhik L  2013 A short proof of the logarithmic Bramson correction in Fisher-KPP equations, 
{\it Netw. Heterog. Media} \textbf{8} 275-289

\bibitem{HNRR2} Hamel F, Nolen J,  Roquejoffre JM and Ryzhik L 2016 The logarithmic delay of KPP fronts in a
periodic medium, {\it J.Eur.Math.Soc.} \textbf{18} 465-505

 \bibitem{HR} Hamel F and Ryzhik L 2014 On the nonlocal Fisher-KPP equation: steady states,spreading speed and global bounds {\it Nonlinearity} \textbf{27} 2735-2753     
 
\bibitem{HMW}  Huang R, Mei M and Wang Y  2012  Planar traveling waves for nonlocal dispersion equation with monostable nonlinearity {\it Discret Contin. Dyn. Syst.} \textbf{32} 3621-3649 
 
 
 \bibitem{HZ} Huang J and Zou X 2003 Existence of traveling wavefronts of delayed reaction diffusion systems without monotonicity {\it Discrete Cont. Dynam. Systems } \textbf{9} 
 925-936

\bibitem{HZ} Hsu S-B and Zhao X-Z 2008, Spreading Speeds and Traveling Waves for Nonmonotone Integrodifference Equations  {\it  SIAM J. Math. Anal.} \textbf{40} 776- 789

\bibitem {KPP} Kolmogorov A, Petrosvkii and Piskunov N 1937, \'Etude de l'\'equation de la diffusion avec croissance de la quantit\'e de mati\`ere et son application 
\`a un probl\`eme biologique  {\it Moscow Uni. Bull. Math.} \textbf{1} 1-25

\bibitem{KIK}  Khusainov D,  Ivanov A and  Kovarzh I 2009 Solution of one heat equation with delay {\it Nonlinear Oscillations} \textbf{12}  260-282.


\bibitem{KSch} Kunisch K and Schappacher W 1983 Necessary conditions for partial differential equations with delay to generate $C_0$-semigroup {\it J.Diff. Eqns.} \textbf{50} 49-79 

\bibitem{L} Lau KS 1985 On the nonlinear diffusion equation of Kolmogorov, Petrovskii and Piskunov  {\it J. Funct. Anal.} \textbf{59}  44-70

\bibitem{LZ} Liang X and Zhao X-Q 2010 Spreading speeds and traveling waves for abstract monostable evolution systems {\it Journal of Functional Analysis} {\bf 259} 857-903

\bibitem{LLLM}  Lin C-K, Lin C-T, Lin Y, Mei M, Exponential stability of nonmonotone traveling waves for Nicholson's blowflies equation, SIAM J. Math. Anal. \textbf{46}(2014) 1053-1084


\bibitem{Mc} McKean H P 1975  Application of Brownian Motion to the Equation of Kolmogorov-Petrosvkii-Piskunov  {\it Communications on Pure and Applied Mathematics}
\textbf{XXVIII} 323-331

\bibitem{MOZ}Mei M,  Ou Ch and  Zhao X-Q 2010 Global stability of monostable traveling waves for nonlocal time-delayed reaction-diffusion equations {\it SIAM J. Math.Anal.} \textbf{42}  
233--258.

\bibitem{Nk} Nakagiri SI 1981 On the Fundamental Solutions of Delay--Differential Equations in Banach Spaces 
{\it J.~Diff.~Eqns}  {\bf 41} 349--368
 
 \bibitem{NRR} Nolen J, Roquejoffre JM and Ryzhik L  2018 Refined long time asymptotics for Fisher-KPP fronts {\it submitted} arXiv: 1607.08802
  
 \bibitem{PW} Protter M and Weinberger H {\it Maximum Principles in Differential Equations}
 ( Englewood Cliffs, NJ:  Prentice-Hall) 1967

\bibitem{ES} van~Saarloos W 2003 Front propagation into unstable states {\it  Physics Reports }  \textbf{386}  29--222

\bibitem{SWZ} So JW-H, Wu J and Zou X  2001 A reaction-diffusion model for a single species with age structure I. Travelling wavefronts on unbounded domains  {\it Proc. R. Soc. A} \textbf{457} 1841-1853

\bibitem{S} Solar A  Stability of semi-wavefronts for delayed reaction-diffusion equations  (to appear)

\bibitem{S2} Solar A Stability of non-monotone and backward waves for delay non-local reaction-diffusion equations  

\bibitem{STR} Solar A and Trofimchuk S 2015 Asymptotic convergence to a pushed wavefront in monostable  equations with delayed reaction {\it Nonlinearity} \textbf{28} 2027-2052

\bibitem{STR1} Solar A and Trofimchuk S 2016 Speed selection and stability of wavefronts for delayed monostable reaction-diffusion equations 
{\it J. Dyn. Diff. Equ.}  \textbf{28} 1265-1292

\bibitem{TZ} Thieme HR and Zhao XQ 2003 Asymptotic speed of spread and traveling waves for integrals equations and delayed reaction-diffusion models 
{\it J. Diff. Eqns.} \textbf{195} 430--470 

\bibitem{TTT}  Trofimchuk E, Tkachenko V and Trofimchuk S 2008 Slowly
oscillating wave solutions of a single species reaction-diffusion
equation with delay  {\it J. Diff. Eqns} \textbf{245} 2307--2332

\bibitem{TAT} Trofimchuk E, Alvarado P and Trofimchuk S 2009 On the Geometry of wave solutions of a delayed reaction-diffusion equation {\it J. Diff. Eqns} \textbf{246} 1422-1444

\bibitem{TPT} Trofimchuk E, Pinto M,  and Trofimchuk S 2016 Monotone waves for non-monotone and non-local monostable reaction-diffusion equations 
{\it Discrete and Continuous Dynamical Systems} \textbf{20} 407--423

\bibitem{UC}   Uchiyama K 1978 The behavior of solutions of some nonlinear diffusion equations for large time {\it J. Math. Kyoto Univ.} \textbf{18} 453--508


 \bibitem{W} Weinberger H  1982, Long-Time Behavior of a Class of Biological Models {\it  SIAM J. Math. Anal.} \textbf{13} 353-396 
 

\bibitem{WLR}  Wang  ZC, Li WT and Ruan S 2008 Travelling fronts in monostable equations with nonlocal delayed effects {\it J. Dyn. Differ. Equ.} \textbf{20} 563--607

\bibitem{Wu} Wu J 1996 Theory and Applications of Partial Functional Differential Equations, Springer-Verlag, New
York

\bibitem{WZ} Wu J and Zou X 2001 Traveling fronts of reaction-diffusion systems with delay {\it J.~Dynam.~Diff.~Eqns.} \textbf{13} 651--687

\bibitem{YZ}     Yi T and Zou X  2015 Asymptotic Behavior, Spreading Speeds, and Traveling Waves of Nonmonotone Dynamical Systems  {\it SIAM J. Math.Anal.}  \textbf{47} 305-334.


\bibitem{YCW}Yi T, Chen Y and  Wu J 2013
Unimodal dynamical systems: Comparison principles, spreading
speeds and travelling waves  {\it J. Diff. Eqns} \textbf{254}
3538--3572
\bibitem{ZW}   Zou Z and Wu J 1997 Existence of traveling wave fronts in delayed reaction-diffusion systems via the monotone iteration method  {\it Proc. Am. Math. Soc.} 
 \textbf{125} 2589-2598

\end{thebibliography}
\end{document}